%3/2010 DMGT referee comments incorporated. Accepted version.
\documentclass[12pt]{amsart}
\usepackage{amstext}
\usepackage{amssymb,latexsym}
\usepackage{graphicx}
\def \mod {\text{ mod }}
\def \diam {\text{diam}}

\newtheorem{thm}{Theorem}[section]

\newtheorem{rmk}[thm]{Remark}

\newtheorem{defin}[thm]{Definition}
\newtheorem{lemma}[thm]{Lemma}

\begin{document}

\title{Radio numbers for generalized prism graphs}
        \date{\today}
   
\author{Paul Martinez}
\address{\hskip-\parindent
 Paul Martinez\\
 California State University Channel Islands.}
\email{paul.martinez@csuci.edu}

\author{Juan Ortiz}
\address{\hskip-\parindent
 Juan Ortiz\\
Lehigh University.}
\email{jpo208@lehigh.edu}

\author{Maggy Tomova}
\address{\hskip-\parindent
 Maggy Tomova\\
The University of Iowa.}
\email{mtomova@math.uiowa.edu}

\author{Cindy Wyels}
\address{\hskip-\parindent
 Cindy Wyels\\
  California State University Channel Islands.}
\email{cynthia.wyels@csuci.edu}

\keywords{radio number, radio labeling, prism graphs}

\thanks{This research was initiated under the auspices of an MAA (SUMMA) Research Experience for Undergraduates program funded by NSA, NSF, and Moody's, and hosted at CSU Channel Islands during Summer, 2006. We are grateful to all for the opportunities provided.}

\begin{abstract}
 A radio labeling is an assignment $c:V(G)
\rightarrow \textbf{N}$ such that every distinct pair of vertices $u,v$ satisfies the inequality
$d(u,v)+|c(u)-c(v)|\geq \diam(G)+1$. The span of a radio labeling is the maximum value. The radio number of
$G$, $rn(G)$, is the minimum span over all radio labelings of $G$.
Generalized prism graphs, denoted $Z_{n,s}$, $s \geq 1$, $n\geq s$, have vertex set $\{(i,j)\,|\, i=1,2 \text{ and } j=1,...,n\}$ and edge set $\{((i,j),(i,j \pm 1))\} \cup \{((1,i),(2,i+\sigma))\,|\,\sigma=-\left\lfloor\frac{s-1}{2}\right\rfloor\,\ldots,0,\ldots,\left\lfloor\frac{s}{2}\right\rfloor\}$. In this
   paper we determine the radio number of $Z_{n,s}$ for $s=1,2$ and 3. In the process we
develop techniques that are likely to be of use in determining radio numbers of other families of graphs.

\vspace{.1in}\noindent \textbf{2000 AMS Subject Classification:} 05C78 (05C15)
\end{abstract}

\maketitle

\section{Introduction}
Radio labeling is a graph labeling problem, suggested by Chartrand, et al \cite{CEHZ}, that is analogous to assigning frequencies to FM channel stations so as to avoid signal interference. Radio stations that are close geographically must have frequencies that are very different, while radio stations with large geographical separation may have similar frequencies. Radio labeling for a number of families of graphs has been studied, for example see \cite{hypercube,m-ary trees,trees,DaphneSquare, square paths,multilevel,cycles}. A survey of known results about radio labeling can be found in \cite{survey}. In this paper we determine the radio number of certain generalized prism graphs.

All graphs we consider are simple and connected. We denote by $V(G)$ the vertices of $G$. We use $d_G(u,v)$ for the length of the shortest path in $G$ between $u$ and $v$. The diameter of $G$, $\diam(G)$, is the maximum distance in $G$. A {\em radio labeling} of $G$ is a function $c_G$ that assigns to each vertex $u \in V(G)$ a positive integer $c_G(u)$ such that any two distinct vertices $u$ and $v$ of $G$ satisfy the radio condition:
\[d_G(u,v)+|c_G(u)-c_G(v)|\geq \diam(G)+1.\]
The {\em span} of a radio labeling is the maximum value of $c_G$. Whenever $G$ is clear from context, we simply write $c(u)$ and $d(u,v)$. The {\em radio number} of $G$, $rn(G)$, is the minimum span over all possible radio labelings of $G$\footnote{We use the convention that $\textbf{N}$ consists of the positive integers. Some authors let $\textbf{N}$ include $0$, with the result that radio numbers using this definition are one less than radio numbers determined using the positive integers.}.

In this paper we determine the radio number of a family of graphs that consist of two $n$-cycles together with some edges connecting vertices from different cycles. The motivating example for this family of graphs is the prism graph, $Z_{n,1}$, which is the Cartesian product of $P_2$, the path on $2$ vertices, and $C_n$, the cycle on $n$ vertices. In other words, a prism graph consists of $2$ $n$-cycles with vertices labeled $(1, i), i=1,\ldots,n$ and $(2, i), i=1,\ldots,n$ respectively together with all edges between pairs of vertices of the form $(1,i)$ and $(2,i)$. Generalized prism graphs, denoted $Z_{n,s}$ have the same vertex set as prism graphs but have additional edges. In particular, vertex $(1,i)$ is also adjacent to
   $(2,i+\sigma)$ for each $\sigma$ in $\{-\left\lfloor\frac{s-1}{2}\right\rfloor\,\ldots,0,\ldots,\left\lfloor\frac{s}{2}\right\rfloor\}$, see Definition \ref{def:gpg}. 
   
\medskip
\noindent\textbf{Main Theorem:} {\em Let $Z_{n,s}$ be a generalized prism graph with $1\leq s \leq 3$, and $(n,s)
\neq (4,3)$. Let $n=4k+r$, where $k \geq 1$, and $r=0,1,2,3$. Then \[rn(Z_{n,s})= (n-1)\phi(n,s)+2.\] where $\phi(n,s)$ is given in the following table:}
\begin{center}
$\phi(n,s):\quad$
\begin{tabular}{c|c|c|c}

{} & {\bf \textit{s}=1} & {\bf \textit{s}=2} & {\bf \textit{s}=3} \\ \hline {\bf \textit{r}=0}& {$k+2$} & $k+1$ &
{$k+2$} \\ \hline {\bf \textit{r}=1} & {$k+2$} & $k+2$ & {$k+1$} \\ \hline {\bf \textit{r}=2} & {$k+3$} & $k+2$
&{$k+2$} \\ \hline {\bf \textit{r}=3}& {$k+2$} & $k+3$ & {$k+2$} \\
\end{tabular}
\end{center}
{\em In addition, $rn(Z_{3,3})=6$ and $rn(Z_{4,3})=9$.}

\section{Preliminaries}

We will use pair notation to identify the vertices of the graphs with the first coordinate identifying the cycle, $1$ or $2$, and the second coordinate identifying the position of the vertex within the cycle, $1,...,n$. To avoid complicated notation, identifying a vertex as $(i,j)$ will always imply that the first coordinate is taken modulo 2 with $i \in \{1,2\}$ and the second coordinate is taken modulo $n$ with $j \in \{1,...,n\}$.

\begin{defin} \label{def:gpg}A generalized prism graph, denoted $Z_{n,s}$, $s \geq 1$, $n\geq s$, has vertex set $\{(i,j)\,|\, i=1,2 \textrm{ and } j=1,...,n\}$. Vertex $(i,j)$ is adjacent to  $(i,j \pm 1)$. In addition, $(1,i)$ is adjacent to
   $(2,i+\sigma)$ for each $\sigma$ in $\{-\left\lfloor\frac{s-1}{2}\right\rfloor\,\ldots,0,\ldots,\left\lfloor\frac{s}{2}\right\rfloor\}$.

   The two $n$-cycle subgraphs of $Z_{n,s}$ induced by 1) all vertices of the form $(1,j)$ and 2) all vertices of the form $(2,j)$ are called {\em principal cycles}.

   \end{defin}
In this notation, the prism graphs $C_n \square P_2$ are $Z_{n,1}$. We note that $Z_{n,2}$ graphs are isomorphic to
the squares of even cycles, $C_{2n} ^2$, whose radio number is determined in \cite{DaphneSquare}. The graphs
$Z_{8,1}$, $Z_{8,2}$, and $Z_{8,3}$ are illustrated in Figure \ref{fig:3graphpic}.

\begin{figure}
\begin{center} \includegraphics[scale=.35]{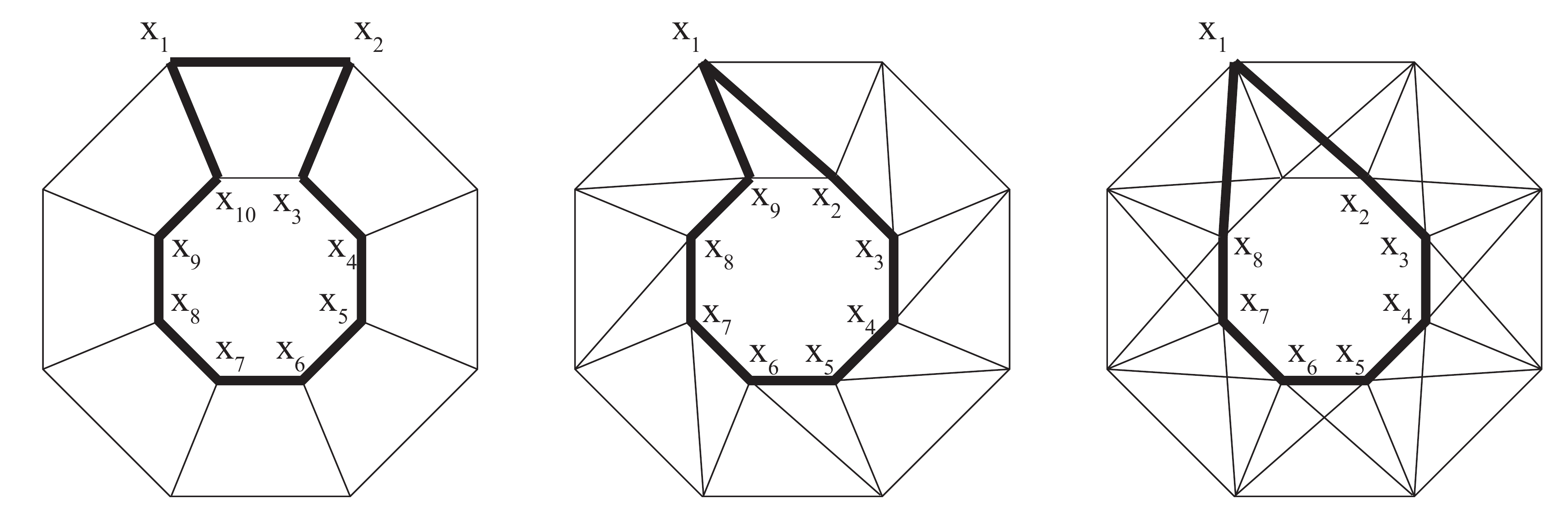}
\end{center}
\caption{$Z_{8,1}$, $Z_{8,2}$, and $Z_{8,3}$, with standard cycles depicted by thickened edges} \label{fig:3graphpic} \end{figure}

\begin{rmk} \label{rmk:radius} Note that $diam (Z_{n,s})=\left\lfloor\frac{n+3-s}{2} \right\rfloor$ for $s=1,2,3$.
\end{rmk}

Our general approach to determining the radio number of $Z_{n,s}$ consists of two steps. We first establish a lower
bound for the radio number. Suppose $c$ is a radio labeling of the vertices of $Z_{n,s}$. We can rename the vertices
of $Z_{n,s}$ with $\{\alpha_i\,|\,i=1,...,2n\}$, so that $c(\alpha_i)<c(\alpha_j)$ whenever $i<j$. We determine the
minimum label difference between $c(\alpha_i)$ and $c(\alpha_{i+2})$, denoted $\phi(n,s)$, and use it to establish
that $rn(Z_{n,s}) \geq 2+(n-1)\phi(n,s)$. We then demonstrate an algorithm that establishes that this lower bound is
in fact the radio number of the graph. We do this by defining a {\em position function} $p:V(G) \rightarrow \{\alpha_i
\,|\, i=1,...,2n\}$ and a {\em labeling function} $c:\{\alpha_i\} \rightarrow Z_+$ that has span $(n-1)\phi(n,s)+2$. We
prove that $p$ is a bijection, i.e., every vertex is labeled exactly once, and that all pairs of vertices together with the labeling $c \circ p^{-1}$ satisfy the
radio condition.

Some small cases of generalized prism graphs with $s=3$ do not follow the general pattern, so we discuss these first. First note that
$Z_{3,3}$ has diameter $1$ and thus can be radio-labeled using consecutive integers, i.e., $rn(Z_{3,3})=6$. To
determine $rn(Z_{4,3})$, note that the diameter of $Z_{4,3}$ is 2. Therefore the radio number of $Z_{4,3}$ is the same as the $L(2,1)$-number of the graph as defined in \cite{L21}. This prism graph is isomorphic to the join of two copies of $K_2\cup K_2$ where $K_2 \cup K_2$ is the disconnected graph with two components each with 2 vertices and one edge. By  \cite{L21}, it follows that $rn(Z_{4,3})=9$. (We thank the referee for pointing out this proof.)

To simplify many of the computations that follow, we make use of the existence of certain special cycles in the
graphs.

\begin{defin} Suppose a graph $G$ contains a subgraph $H$ isomorphic to
a cycle, and let $v \in V(H)$.
\begin{itemize}
\item We will call $H$ a $v$-tight cycle if for every $u\in V(H)$, $d_G(u,v)=d_H(u,v)$.
\item We will call $H$ a tight cycle if for every pair of vertices $u,w$ in $H$, $d_G(u,w)=d_H(u,w)$.
\end{itemize}
\end{defin}
\noindent We note that $H$ is a tight cycle if and only if $H$ is $v$-tight for every $v \in V(H)$.

\begin{rmk}\label{rmk:principletightcycles}
Each of the two principal $n$-cycles is tight.
\end{rmk}

Particular $v$-tight cycles of maximum length play an important role in our proofs. Figure \ref{fig:3graphpic} uses bold edges to indicate a particular $(1,1)$-tight cycle of maximum length for each of the three types of graphs. The figure illustrates these cycles in the particular case when $n=8$ but it is easy to generalize the construction to any $n$. We will call these particular maximum-length $(1,1)$-tight cycles in $Z_{n,s}$ {\em standard}. Thus each generalized prism graph with $1 \leq s\leq 3$ has a standard cycle. For convenience, we will use a second set of names for the vertices of a standard cycle when focusing on properties of, or distance within, the standard cycles. The vertices of a standard cycle for $Z_{n,s}$ will be labeled $X^s_i$, $i=1,..,n+3-s$, where
\[X^1_i= \left\{
  \begin{array}{ll}
   $(1,1)$, & \hbox{if $i=1$,} \\
    $(1,2)$, & \hbox{if $i=2$,}\\
    $(2,$i-1$)$   & \hbox{if $i \geq 3$.}
  \end{array}
\right.\] 
and for $k=2,3$, 
\[X^k_i= \left\{
  \begin{array}{ll}
   $(1,1)$, & \hbox{if $i=1$,} \\
    $(2,$i$)$   & \hbox{if $i \geq 2$.}
  \end{array}
\right.\]
These labels are illustrated in Figure \ref{fig:3graphpic}.

\begin{rmk} \label{rmk:standardtightcycles} The standard cycles depicted in Figure \ref{fig:3graphpic} are $(1,1)$-tight and have $n+3-s$ vertices. Therefore each standard cycle has diameter equal to the diameter of its corresponding $Z_{n,s}$ graph.

\end{rmk}

%***************************************Lower Bound***********************************
\section{Lower Bound}

Suppose $c$ is a radio labeling of $G$ with minimum span. Intuitively, building such a labeling requires one to find
groups of vertices that are pairwise far from each other so they may be assigned labels that have small pairwise
differences. The following lemma will be used to determine the maximal pairwise distance in a group of 3
vertices in $Z_{n,s}$. This leads to Lemma \ref{lem:phifunct}, in which we determine the minimum difference
between the largest and smallest label in any group of 3 vertex labels. %**************************************Distance proof for Z_{n,s} graphs***************
\begin{lemma}  \label{lem:vexdis4AP}
Let $\{u,v,w\}$ be any subset of size 3 of $V(Z_{n,s})$, $1\leq s\leq 3$, with the exception of $\{(1,j), (2,j),(i,l)\}$ in
$V(Z_{n,3})$. Then $d(u,v)+d(v,w)+d(u,w)\leq n+3-s$.
\end{lemma}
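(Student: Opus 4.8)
The plan is to reduce everything to one elementary fact about three points on a cycle and then run a short case analysis according to how $u,v,w$ distribute between the two principal cycles. The cycle fact I would use repeatedly is: if $x_1,x_2,x_3$ are distinct vertices of a cycle $C_m$, then $d_{C_m}(x_1,x_2)+d_{C_m}(x_2,x_3)+d_{C_m}(x_1,x_3)\le m$. (Writing the three arcs determined by the points as $\alpha,\beta,\gamma$ with $\alpha+\beta+\gamma=m$, each pairwise distance is the minimum of an arc and its complement, and a one-line check gives the bound, with equality exactly when the arc lengths satisfy the triangle inequality.) Since $\diam(Z_{n,s})=\lfloor\frac{n+3-s}{2}\rfloor$ and, by Remark~\ref{rmk:standardtightcycles}, $n+3-s$ is the length of a standard cycle, this is precisely the quantity I want to match.

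Next I would record the distances in $Z_{n,s}$ that the argument needs. Because every edge of $Z_{n,s}$ changes the second coordinate by at most $1$, the second coordinates along any path in $Z_{n,s}$ trace a walk in $C_n$, so any path between $(i,a)$ and $(i',b)$ has length at least $d_{C_n}(a,b)$; combining this with the tightness of the principal cycles (Remark~\ref{rmk:principletightcycles}) gives $d((i,a),(i,b))=d_{C_n}(a,b)$. For the cross distances: when $s=1$ a path from cycle $1$ to cycle $2$ must in addition use a rung, so $d((1,a),(2,b))=d_{C_n}(a,b)+1$; when $s=3$ the shadow bound gives the lower bound $\max\{1,d_{C_n}(a,b)\}$ and a path along one principal cycle followed by a single diagonal rung meets it, so $d((1,a),(2,b))=\max\{1,d_{C_n}(a,b)\}$; and when $s=2$ it is cleaner to use the isomorphism $Z_{n,2}\cong C_{2n}^2$ noted after Definition~\ref{def:gpg}, under which $d_{Z_{n,2}}(x,y)=\lceil d_{C_{2n}}(x,y)/2\rceil$.

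With these in hand I would run the following case analysis. If $u,v,w$ lie in the same principal cycle, tightness and the cycle fact give a sum of at most $n\le n+3-s$. Otherwise two of them lie in one principal cycle and one in the other; using the reflection exchanging the two cycles (an automorphism of $Z_{n,s}$ for $s=1,2,3$) I may assume $u=(1,a)$, $v=(1,b)$, $w=(2,c)$. For $s=1$ the sum is $d_{C_n}(a,b)+d_{C_n}(a,c)+d_{C_n}(b,c)+2\le n+2=n+3-1$. For $s=2$, applying the ceiling formula to the three arcs of $C_{2n}$ cut out by the vertices, the sum is $\lceil a'/2\rceil+\lceil b'/2\rceil+\lceil c'/2\rceil$ with $a'+b'+c'\le 2n$; since $a'+b'+c'$ has the same parity as the number of odd terms among them, $a'+b'+c'+(\#\text{odd terms})$ is an even integer at most $2n+2$, so the sum is at most $n+1=n+3-2$ (this estimate in fact also covers the same-cycle case for $s=2$). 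For $s=3$ the sum is $d_{C_n}(a,b)+\max\{1,d_{C_n}(a,c)\}+\max\{1,d_{C_n}(b,c)\}$; if $c\notin\{a,b\}$ neither maximum is active and the cycle fact yields at most $n=n+3-3$, while if $c=a$ or $c=b$ the triple is $\{(1,j),(2,j),(i,l)\}$, which is exactly the configuration excluded in the hypothesis.

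The main obstacle I anticipate is not the case analysis but pinning down the distance formulas cleanly — especially the $s=3$ cross distance and the reduction for $s=2$ — and then making the parity bookkeeping in the $s=2$ estimate tight enough to land on $n+1$ rather than $n+2$, together with verifying that the $s=3$ exceptional triples are precisely those on which a $\max\{1,\cdot\}$ term forces the bound up by one. Everything else should be routine.
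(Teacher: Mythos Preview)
Your argument is correct, but it proceeds by a genuinely different route from the paper's. The paper exploits the standard cycles directly: it first notes that any three vertices lying on a common cycle of length $t$ have pairwise $G$-distance sum at most $t$ (since graph distance is bounded above by cycle distance), and then, after a symmetry reduction placing one vertex at $(1,1)$ and the other two on the second principal cycle, observes that all three vertices lie on the standard $(1,1)$-tight cycle of length exactly $n+3-s$. The excluded $s=3$ triple arises precisely because the standard cycle for $s=3$ omits the vertex $(2,1)$. Your approach instead derives explicit cross-cycle distance formulas for each $s$ and reduces, for $s=1$ and $s=3$, to the cycle fact applied to $C_n$ with correction terms $+2$ and $0$, and for $s=2$ to a ceiling-and-parity estimate via the $C_{2n}^2$ model. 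The paper's proof is shorter and uniform across $s$, and it makes transparent why the bound equals the length of the standard cycle; your version is more self-contained in that it does not depend on constructing or verifying tightness of the standard cycles, and the parity bookkeeping you use for $s=2$ is an independently clean way to bound $\sum \lceil a_i'/2\rceil$. One small remark: for $s=2$ the naive cycle-swap $(1,j)\leftrightarrow(2,j)$ is not an automorphism, but a swap composed with a shift is, and in any case your $C_{2n}^2$ argument for $s=2$ applies to arbitrary triples and does not actually need that reduction.
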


\begin{proof} Note that if $u$,$v$, and $w$ lie on a cycle of length $t$, then $d(u,w)+d(v,u)+d(w,u) \leq t$.
If $u$,$v$, and $w$ lie on the same principal $n$-cycle, the desired result follows immediately, as all
three vertices lie on a cycle of length $n$, and $1\leq s\leq 3$.

Suppose $u$, $v$, and $w$ do not all lie on the same principal $n$-cycle. Without loss of generality, assume $u=(1,1)$,
and $v$ and $w$ lie on the second principal $n$-cycle. Then for $s=1\textrm{ or } 2$, the standard cycle includes
$(1,1)$ and all vertices $(2,i)$, so $v$ and $w$ lie on the standard cycle. For $s=3$, the standard cycle includes all
vertices $(2,i)$, $i>1$. As the triple $\{(1,j), (2,j),(i,l)\}$ in $V(Z_{n,3})$ was eliminated in the hypothesis, it
follows that for $s\leq 3$ all three vertices lie on the appropriate standard cycle. As the standard cycle in each
case is of length $n-s+3$, the result follows as above.
 \end{proof}

%*********************Gap Function************************************************* TABLE CHECKED. MT 6/21/07
\begin{lemma} \label{lem:phifunct}

Let $c$ be a radio labeling of $Z_{n,s}$, $1\leq s \leq 3$ and $n=4k+r$, where $k \geq 1$, $r=0,1,2,3$, and $(n,s)
\neq (4,3)$. Suppose $V(G)=\{\alpha_i \,|\, i=1,...,2n\}$ and $c(\alpha_i)< c(\alpha_{j})$ whenever $i<j$. Then we
have $|c(\alpha_{i+2})-c(\alpha_{i})| \geq \phi(n,s)$, where the values of $\phi(n,s)$ are given in the following
table.

\begin{center}
$\phi(n,s):\quad$
\begin{tabular}{c|c|c|c}

{} & {\bf \textit{s}=1} & {\bf \textit{s}=2} & {\bf \textit{s}=3} \\ \hline {\bf \textit{r}=0}& {$k+2$} & $k+1$ &
{$k+2$} \\ \hline {\bf \textit{r}=1} & {$k+2$} & $k+2$ & {$k+1$} \\ \hline {\bf \textit{r}=2} & {$k+3$} & $k+2$
&{$k+2$} \\ \hline {\bf \textit{r}=3}& {$k+2$} & $k+3$ & {$k+2$} \\
\end{tabular}
\end{center}

\end{lemma}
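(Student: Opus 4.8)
The plan is to use the radio condition together with Lemma \ref{lem:vexdis4AP} to bound consecutive-plus-one label differences, and then to handle separately the exceptional triples that the previous lemma excludes. Recall that for any three vertices $\alpha_i,\alpha_{i+1},\alpha_{i+2}$ the radio condition gives
\[
|c(\alpha_{i+2})-c(\alpha_i)| \;=\; (c(\alpha_{i+1})-c(\alpha_i))+(c(\alpha_{i+2})-c(\alpha_{i+1})) \;\geq\; 2(\diam(G)+1) - d(\alpha_i,\alpha_{i+1}) - d(\alpha_{i+1},\alpha_{i+2}).
\]
So to get a lower bound on $|c(\alpha_{i+2})-c(\alpha_i)|$ it suffices to get an \emph{upper} bound on $d(\alpha_i,\alpha_{i+1})+d(\alpha_{i+1},\alpha_{i+2})$. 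First I would treat the generic case: when the triple $\{\alpha_i,\alpha_{i+1},\alpha_{i+2}\}$ is not one of the excluded triples $\{(1,j),(2,j),(\cdot,\cdot)\}$ in $Z_{n,3}$, Lemma \ref{lem:vexdis4AP} bounds the full perimeter $d(\alpha_i,\alpha_{i+1})+d(\alpha_{i+1},\alpha_{i+2})+d(\alpha_i,\alpha_{i+2})\leq n+3-s$, hence $d(\alpha_i,\alpha_{i+1})+d(\alpha_{i+1},\alpha_{i+2})\leq n+3-s-d(\alpha_i,\alpha_{i+2})$. Substituting $\diam(G)=\lfloor\frac{n+3-s}{2}\rfloor$ from Remark \ref{rmk:radius} and using $d(\alpha_i,\alpha_{i+2})\geq 1$ yields a numerical lower bound; I would then check in each of the twelve $(r,s)$ cells of the table that this bound equals the claimed $\phi(n,s)$, being careful about the floor function's parity behavior (this is where the four values of $r$ split the analysis) and about the fact that sometimes one needs $d(\alpha_i,\alpha_{i+2})\geq 2$ rather than $\geq 1$ to reach the stated value — in those cells one argues that if $d(\alpha_i,\alpha_{i+2})=1$ then the perimeter bound forces $d(\alpha_i,\alpha_{i+1})+d(\alpha_{i+1},\alpha_{i+2})\leq n+2-s$, and a direct parity/counting argument (or re-examining which triples achieve the extremal perimeter) improves the estimate.

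Next I would dispatch the exceptional triples in $Z_{n,3}$, i.e.\ when $\{\alpha_i,\alpha_{i+1},\alpha_{i+2}\}=\{(1,j),(2,j),(i_0,l)\}$ for some $j$ and some third vertex. Here Lemma \ref{lem:vexdis4AP} does not apply, but the structure is very constrained: $(1,j)$ and $(2,j)$ are adjacent in $Z_{n,3}$ (since $\sigma=0$ is always an allowed shift), so $d((1,j),(2,j))=1$, and moreover $(1,j)$ and $(2,j)$ have almost the same distances to every other vertex — precisely, $|d((1,j),x)-d((2,j),x)|\leq 1$ for all $x$. Thus for the third vertex $x=(i_0,l)$ we get $d((1,j),x)+d((2,j),x)\leq 2d((i_0,l),(1,j))+1 \le 2\,\diam(G)+1$ but more usefully the two of the three pairwise distances that involve both $(1,j)$ and $(2,j)$ differ by at most $1$; combined with $d((1,j),(2,j))=1$ this still lets me bound $d(\alpha_i,\alpha_{i+1})+d(\alpha_{i+1},\alpha_{i+2})$ well enough. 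In fact the worst sub-case for the sum is when the vertex of the triple playing the role of $\alpha_{i+1}$ (the middle-labeled one) is $(i_0,l)$, so that $\alpha_i,\alpha_{i+2}\in\{(1,j),(2,j)\}$ and $d(\alpha_i,\alpha_{i+2})=1$; then $d(\alpha_i,\alpha_{i+1})+d(\alpha_{i+1},\alpha_{i+2})\leq 2\,\diam(G)$ trivially, but I can do better by noting these two distances differ by at most $1$, giving a bound like $\diam(G)+\lceil\cdot\rceil$ that again meets the table value for $s=3$. The other sub-cases (where $(1,j)$ or $(2,j)$ is the middle vertex) are easier since then $d(\alpha_i,\alpha_{i+2})$ is one of the near-equal distances to $(i_0,l)$.

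I expect the main obstacle to be the bookkeeping in the generic case: verifying that the single inequality coming from Lemma \ref{lem:vexdis4AP} plus $d(\alpha_i,\alpha_{i+2})\geq 1$ is tight enough in every one of the twelve cells, and identifying exactly the cells ($s=1,r=2$ and $s=2,r=3$ look like the candidates, where $\phi=k+3$) where it is \emph{not} tight and an extra $+1$ must be squeezed out. For those cells the argument must show that the extremal configuration with perimeter $n+3-s$ and $d(\alpha_i,\alpha_{i+2})=1$ is impossible, or that it forces the two larger pairwise distances to be unequal so that their maximum — hence the relevant jump — is at least one more than the average suggests; this likely requires going back into the geometry of $Z_{n,s}$ (which triples on the standard cycle realize the maximum perimeter, and what their distance pattern looks like) rather than staying purely at the level of the perimeter inequality. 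Once the case analysis is organized by $(r,s)$ with the parity of $n+3-s$ made explicit, each individual verification should be a short computation.
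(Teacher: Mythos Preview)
Your generic-case argument has a genuine gap: using only the two radio inequalities for the pairs $(\alpha_i,\alpha_{i+1})$ and $(\alpha_{i+1},\alpha_{i+2})$ is far too weak. With your approach, the bound you obtain is
\[
c(\alpha_{i+2})-c(\alpha_i)\;\geq\; 2(\diam+1)-\bigl(n+3-s-d(\alpha_i,\alpha_{i+2})\bigr)\;=\;2\,\diam-(n+3-s)+2+d(\alpha_i,\alpha_{i+2}),
\]
and plugging in $d(\alpha_i,\alpha_{i+2})\geq 1$ (or even $\geq 2$) gives a bound that does not grow with $k$. For instance, when $s=1$ and $n=4k$ one has $\diam=2k+1$, so your inequality yields $c(\alpha_{i+2})-c(\alpha_i)\geq 3$, whereas the table requires $k+2$. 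The discrepancy is of order $k$, not $1$, so no amount of ``squeezing out an extra $+1$'' in individual cells will rescue the argument.

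The missing idea is to use \emph{all three} radio inequalities simultaneously. Summing the three conditions and using $c(\alpha_i)<c(\alpha_{i+1})<c(\alpha_{i+2})$ makes the label differences telescope to $2\bigl(c(\alpha_{i+2})-c(\alpha_i)\bigr)$, and then the perimeter bound of Lemma~\ref{lem:vexdis4AP} gives
\[
c(\alpha_{i+2})-c(\alpha_i)\;\geq\;\tfrac{1}{2}\bigl(3\,\diam(Z_{n,s})+3-(n+3-s)\bigr).
\]
Substituting $\diam(Z_{n,s})=\lfloor(n+3-s)/2\rfloor$ and taking the ceiling (since the left side is an integer) produces exactly the table values in every cell, with no residual case analysis. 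Your handling of the exceptional triple in $Z_{n,3}$ is also more complicated than necessary: since $(1,j)$ and $(2,j)$ are adjacent, the radio condition applied directly to that pair gives $|c(1,j)-c(2,j)|\geq\diam(Z_{n,3})=\lfloor n/2\rfloor\geq 2k$, and as both vertices lie among $\alpha_i,\alpha_{i+1},\alpha_{i+2}$ this immediately bounds $c(\alpha_{i+2})-c(\alpha_i)$ from below by $2k\geq\phi(n,3)$ for $k\geq 2$, leaving only $n=5,6,7$ to check by hand.
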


\begin{proof}

First assume $\{\alpha_{i}, \alpha_{i+1}, \alpha_{i+2}\}$ are any three vertices in any generalized prism graph with
$1\leq s\leq 3$ except $\{(1,j), (2,j),(i,l)\}$ in $V(Z_{n,3})$.
 Apply the radio condition to each pair in the vertex set $\{\alpha_{i}, \alpha_{i+1},
\alpha_{i+2}\}$ and take the sum of the three inequalities. We obtain

\begin{eqnarray}
&d(\alpha_{i+1},\alpha_{i})&+d(\alpha_{i+2},\alpha_{i+1})+d(\alpha_{i+2},\alpha_{i}) \nonumber \\
&&+|c(\alpha_{i+1})-c(\alpha_i)|+|c(\alpha_{i+2})-c(\alpha_{i+1})|+|c(\alpha_{i+2})-c(\alpha_i)|\nonumber \\
&&\geq 3\,\diam(Z_{n,s})+3.
\end{eqnarray}

We drop the absolute value signs because $c(\alpha_i) < c(\alpha_{i+1}) < c(\alpha_{i+2})$, and use Lemma
\ref{lem:vexdis4AP} to rewrite the inequality as 
\[c(\alpha_{i+2})-c(\alpha_i) \geq
\frac{1}{2}\left(3+3\,\diam(Z_{n,s})-(n+3-s)\right).\]
 The table in the statement of the lemma has been generated by substituting the appropriate values for $\diam(Z_{n,s})$ from Remark \ref{rmk:radius} and simplifying. As the computations are straightforward but tedious, they are not included.

It remains to consider the case $\{\alpha_{i}, \alpha_{i+1}, \alpha_{i+2}\}=\{(1,j), (2,j),(i,l)\}$ in
$V(Z_{n,3})$. From the radio condition, it follows that
\[d\left((1,j),(2,j)\right)+|c(1,j)-c(2,j)|\geq \left\lfloor \frac{n}{2} \right\rfloor+1,\]
and so
\[|c(1,j)-c(2,j)|\geq \left\lfloor \frac{n}{2}\right\rfloor \geq 2k.\]
Thus we may conclude
\[|c(\alpha_{i+2})-c(\alpha_i)|\geq
|c(1,j)-c(2,j)| \geq 2k.\]

If $k \geq 2$, then
\[|c(\alpha_{i+2})-c(\alpha_i)|\geq 2k \geq k+2 \geq \phi(n,3).\]

If $k=1$, recall that $Z_{4,3}$ is excluded in the hypothesis. It is easy to verify that for $n=5,6,7$, $\phi(n,3)=\left\lfloor
\frac{n}{2}\right\rfloor$.
\end{proof}

\begin{rmk} \label{rmk:phi}
For all values of $n$ and $1\leq s\leq 3$, $2\phi(n,s)\geq \diam(Z_{n,s})$.
\end{rmk}

%***************************************rn(G) formula for phi of n********************
\begin{thm} \label{thm:lowerbound}
For every graph $Z_{n,s}$ with $1\leq s\leq 3$,
\[rn(Z_{n,s})\geq (n-1)\phi(n,s)+2.\]

\end{thm}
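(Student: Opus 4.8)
The plan is to leverage Lemma \ref{lem:phifunct} directly: it controls the gap between labels that are two steps apart in the sorted order, and summing these gaps along an arithmetic progression of indices produces exactly the claimed bound. First I would fix a radio labeling $c$ of $Z_{n,s}$ of minimum span and rename the vertices $\alpha_1,\ldots,\alpha_{2n}$ so that $c(\alpha_1)<c(\alpha_2)<\cdots<c(\alpha_{2n})$; the inequalities are strict because, as $\diam(Z_{n,s})\geq 1$, the radio condition forces distinct vertices to receive distinct labels. Without loss of generality $c(\alpha_1)=1$, since translating every label down by $c(\alpha_1)-1$ preserves the radio condition and does not increase the span; hence $c(\alpha_2)\geq 2$.

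Next I would apply Lemma \ref{lem:phifunct} to each of the $n-1$ consecutive triples $\{\alpha_{2i},\alpha_{2i+1},\alpha_{2i+2}\}$, $i=1,\ldots,n-1$, to get $c(\alpha_{2i+2})-c(\alpha_{2i})\geq \phi(n,s)$. Summing over $i$ telescopes to
\[
c(\alpha_{2n})-c(\alpha_{2})=\sum_{i=1}^{n-1}\bigl(c(\alpha_{2i+2})-c(\alpha_{2i})\bigr)\geq (n-1)\,\phi(n,s),
\]
so that $\mathrm{span}(c)=c(\alpha_{2n})\geq c(\alpha_2)+(n-1)\phi(n,s)\geq (n-1)\phi(n,s)+2$. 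Since $c$ was an arbitrary minimum-span radio labeling, this gives $rn(Z_{n,s})\geq (n-1)\phi(n,s)+2$.

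There is essentially no obstacle in this step: the real content sits in Lemma \ref{lem:phifunct} (and, behind it, Lemma \ref{lem:vexdis4AP}), and the present theorem merely assembles those estimates via a telescoping sum. The only points that need a little care are the book-keeping at the ends of the chain — normalizing $c(\alpha_1)=1$ so that the constant is $+2$ rather than merely $+1$ — and the observation that the excluded pair $(n,s)=(4,3)$, for which Lemma \ref{lem:phifunct} is not stated, is handled separately by the direct computation $rn(Z_{4,3})=9$ recorded in Section 2. One could equally run the telescoping argument along the odd-indexed chain $\alpha_1,\alpha_3,\ldots,\alpha_{2n-1}$, using $c(\alpha_1)\geq 1$ and then $c(\alpha_{2n})>c(\alpha_{2n-1})$ to recover the same bound.
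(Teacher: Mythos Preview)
Your proof is correct and follows essentially the same approach as the paper: telescope the inequality of Lemma~\ref{lem:phifunct} along a step-$2$ chain of indices and pick up the final unit from the strict ordering of labels. The only cosmetic difference is that you run the chain along the even indices $\alpha_2,\alpha_4,\ldots,\alpha_{2n}$ (using $c(\alpha_2)\geq 2$ at the start), whereas the paper runs it along the odd indices $\alpha_1,\alpha_3,\ldots,\alpha_{2n-1}$ (using $c(\alpha_{2n})>c(\alpha_{2n-1})$ at the end) --- the variant you yourself mention in your final paragraph.
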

\begin{proof}
We may assume $c(\alpha_{1})=1$. By Lemma \ref{lem:phifunct}, $|c(\alpha_{i+2})-c(\alpha_{i})| \geq \phi(n,s)$, so
$c(\alpha_{2i-1})=c(\alpha_{1+2(i-1)})\geq (i-1)\phi(n,s)+1$. Note that all generalized prism graphs have $2n$
vertices. As
\[c(\alpha_{2n-1})\geq(n-1)\phi(n,s) +1,\] we have
\[c(\alpha_{2n})\geq c(\alpha_{2n-1})+1= (n-1)\phi(n,s)+2.\]

\end{proof}

%***************************************Upper Bound***********************************
\section{Upper Bound}
To construct a labeling for $Z_{n,s}$ we will define a position function $p :\{ \alpha_i \,|\, i=1,...,2n\}
\rightarrow V(Z_{n,s})$ and then a labeling function $c: \{\alpha_i \,|\, i=1,...,2n\} \rightarrow \textbf{N}$. The
composition $c\circ p^{-1}$ gives an algorithm to label $Z_{n,s}$, and this labeling has span equal to the lower bound
found in Theorem \ref{thm:lowerbound}. The labeling function depends only
on the function $\phi(n,s)$ defined in Lemma \ref{lem:phifunct}.
\begin{defin}  \label{defin:labeling}
Let $\{\alpha_i \,|\, i=1,...,2n\}$ be the vertices of $Z_{n,s}$. Define $c:\{\alpha_i \,|\, i=1,...,2n\}\rightarrow
\textbf{N}$ to be the function
\[c(\alpha_{2i-1})=1+(i-1)\phi(n,s)\textrm{, and}\]
\[c(\alpha_{2i})=2+(i-1)\phi(n,s).\]
\end{defin}

Suppose $f$ is any labeling of any graph $G$. If for some $u,v \in V(G)$ the inequality $|f(u)-f(v)| \geq \textrm{diam}(G)$ holds, then the radio condition is always satisfied for $u$ and $v$. The next lemma uses this property to limit the number of vertex pairs for which it must be checked that the labeling $c$ of Definition \ref{defin:labeling} satisfies the
radio condition.

\begin{lemma}\label{lem:fcol}
Let $\{\alpha_i \,|\, i=1,...,2n\}$ be the vertices of $Z_{n,s}$ and $c$ be the labeling of Definition
\ref{defin:labeling}. Then whenever $|l-k|\geq 4$, $d(\alpha_l,\alpha_k)+|c(\alpha_l)-c(\alpha_k)|\geq
\textrm{diam}(Z_{n,s})+1$.
\end{lemma}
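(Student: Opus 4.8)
The plan is to avoid estimating any graph distances and instead to show that the hypothesis $|l-k|\ge 4$ already forces the label difference by itself to be at least $2\phi(n,s)$. Since Remark~\ref{rmk:phi} gives $2\phi(n,s)\ge\diam(Z_{n,s})$, and since $d(\alpha_l,\alpha_k)\ge 1$ for distinct vertices, this immediately yields
\[d(\alpha_l,\alpha_k)+|c(\alpha_l)-c(\alpha_k)|\ge 1+\diam(Z_{n,s}),\]
which is exactly the radio condition. Thus the whole lemma reduces to a bookkeeping statement about the labeling function $c$ of Definition~\ref{defin:labeling}, with no appeal to the geometry of $Z_{n,s}$ at all.

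To make that bookkeeping clean, I would write each index in ``pair form'': any $m\in\{1,\dots,2n\}$ is uniquely $m=2q-\varepsilon$ with $q\in\{1,\dots,n\}$ and $\varepsilon\in\{0,1\}$, and then $c(\alpha_m)=(2-\varepsilon)+(q-1)\phi(n,s)$. Assuming without loss of generality that $l>k$ and writing $k=2a-\gamma$, $l=2b-\delta$ with $\gamma,\delta\in\{0,1\}$, one obtains the two identities
\[c(\alpha_l)-c(\alpha_k)=(\gamma-\delta)+(b-a)\,\phi(n,s)\qquad\text{and}\qquad l-k=2(b-a)+(\gamma-\delta).\]

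The argument then splits on the value of $\gamma-\delta\in\{-1,0,1\}$. In each case, feeding $l-k\ge 4$ into the second identity bounds $b-a$ from below: $b-a\ge 2$ when $\gamma-\delta\in\{0,1\}$, and $b-a\ge 3$ when $\gamma-\delta=-1$. Substituting these bounds into the first identity and using $\phi(n,s)\ge 1$ gives $c(\alpha_l)-c(\alpha_k)\ge 2\phi(n,s)$ in all three cases. This is a short finite check rather than a genuine obstacle; the one place needing care is $\gamma-\delta=-1$ --- that is, $\alpha_k$ is the ``second'' vertex of its pair and $\alpha_l$ the ``first'' of its pair --- because there the offset $b-a=2$ would give only $2\phi(n,s)-1$, so one must confirm that this configuration forces $l-k=3$ and is therefore excluded by the hypothesis $|l-k|\ge 4$. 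Once $c(\alpha_l)-c(\alpha_k)\ge 2\phi(n,s)$ is established, Remark~\ref{rmk:phi} together with $d(\alpha_l,\alpha_k)\ge 1$ finishes the proof.
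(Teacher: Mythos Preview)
Your proof is correct and follows the same strategy as the paper: establish $|c(\alpha_l)-c(\alpha_k)|\ge 2\phi(n,s)$ and then invoke Remark~\ref{rmk:phi} together with $d(\alpha_l,\alpha_k)\ge 1$. The paper's version is more streamlined---it observes that $c$ is nondecreasing in the index (since $\phi(n,s)\ge 1$) and that $c(\alpha_{k+4})-c(\alpha_k)=2\phi(n,s)$ directly, so $l\ge k+4$ gives $c(\alpha_l)-c(\alpha_k)\ge c(\alpha_{k+4})-c(\alpha_k)=2\phi(n,s)$ without any case split on parities.
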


\begin{proof}
Without loss of generality, let $l>k$. Since $c(\alpha_{k+4})\leq c(\alpha_l)$, it follows that
\[c(\alpha_l)-c(\alpha_k)\geq c(\alpha_{k+4})-c(\alpha_k)=2\phi(n,s).\]
From Remark \ref{rmk:phi} it follows that
\[|c(\alpha_l)-c(\alpha_k)|+d(\alpha_l,\alpha_k)\geq 2\phi(n,s)+1\geq \textrm{diam}(Z_{n,s})+1.\]
\end{proof}

We will need to consider four different position functions depending on $n$ and $s$. Each of these position functions together with the labeling function in Definition \ref{defin:labeling} gives an algorithm for labeling a particular $Z_{n,s}$.

%************************ALGORITHM 1*********************************
\begin{center}
\textbf{Case 1: $n=4k+r$, $r=1,2,3$ and $s\leq 3$}

\textbf{except $n=4k+2$ when $k$ is even and $s=3$} \end{center}

The idea is to find a position function which allows pairs of consecutive integers to be used as labels as often as possible. To use consecutive integers we need to find pairs of vertices in $Z_{n,s}$ with distance equal to the diameter. We will do this by taking advantage of the standard cycles for each value of $s$.

\begin{lemma} \label{lem:diameteraway}
   For all $n \geq 3$ and $s\leq3$,  $d\left((1,y),(2, y+D)\right)=\diam(Z_{n,s})$ where 
\[D=\left\{
           \begin{array}{ll}
             \left\lfloor
   {\frac{n+1}{2}}\right\rfloor, & \hbox{for $s=1 \textrm{ and } 3$, } \\ \\
             \left\lfloor
   {\frac{n+2}{2}}\right\rfloor, & \hbox{for $s=2$.}
           \end{array}
         \right.\]

\end{lemma}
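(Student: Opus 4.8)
The map $(i,j)\mapsto(i,j+1)$ (with the second coordinate read mod $n$) sends every edge of $Z_{n,s}$ to an edge, hence is an automorphism; iterating it $y-1$ times carries $(1,y)$ to $(1,1)$ and $(2,y+D)$ to $(2,1+D)$ while preserving all distances. So it suffices to prove $d\bigl((1,1),(2,1+D)\bigr)=\diam(Z_{n,s})$, and I will argue this by locating $(2,1+D)$ on the standard cycle of $Z_{n,s}$ and invoking $(1,1)$-tightness.

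First I would check that $1\le D<n$ for every $n\ge 3$ and $s\le 3$ (a one-line inequality in each of the three cases), so that $1+D\in\{2,\ldots,n\}$ and in particular $(2,1+D)$ is one of the vertices $(2,j)$, $j\ne 1$, that lie on the standard cycle. Using the $X^s_i$ labelling of the standard cycle from the Preliminaries, this identifies $(2,1+D)$ as $X^1_{D+2}$ when $s=1$ (since $(2,j)=X^1_{j+1}$ there) and as $X^s_{1+D}$ when $s=2,3$ (since $(2,j)=X^s_{j}$ there). Because the standard cycle $C$ is $(1,1)$-tight (Remark~\ref{rmk:standardtightcycles}), $d_{Z_{n,s}}\bigl((1,1),(2,1+D)\bigr)=d_C\bigl(X^s_1,(2,1+D)\bigr)$, and in a cycle on $n+3-s$ vertices this cycle-distance equals $\min\!\bigl(\,t,\ (n+3-s)-t\,\bigr)$, where $t$ is the index offset computed in the previous sentence ($t=D+1$ for $s=1$, $t=D$ for $s=2,3$).

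It then remains to substitute the definition of $D$ and of $\diam(Z_{n,s})=\lfloor(n+3-s)/2\rfloor$ (Remark~\ref{rmk:radius}) and verify that the displayed minimum equals $\lfloor(n+3-s)/2\rfloor$; this splits into the three values of $s$ and the two parities of $n$, six short computations, e.g.\ for $s=3$, $n=2m+1$ one gets $D=m+1$ and $\min(m+1,(n)-(m+1))=\min(m+1,m)=m=\lfloor n/2\rfloor$. Equivalently one may observe more conceptually that $\lfloor(n+3-s)/2\rfloor$ is exactly the diameter of $C$ (again Remark~\ref{rmk:standardtightcycles}), so what is really being checked is that the chosen vertex is (a) diametral on $C$. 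I expect this last bookkeeping — keeping the off-by-one in the $s=1$ correspondence straight and confirming the chosen index is genuinely antipodal (or near-antipodal, for odd cycle length) — to be the only real source of friction; everything else is structural and follows from tightness of the standard cycle.
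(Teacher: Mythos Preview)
Your proposal is correct and follows essentially the same approach as the paper: reduce to $(1,y)=(1,1)$, locate $(2,1+D)$ on the standard cycle via the $X^s_i$ labelling, and use that the standard cycle is $(1,1)$-tight and isomorphic to $C_{n+3-s}$ so that $(2,1+D)$ sits at the antipodal index. The paper compresses all of this into two sentences (writing the index as $X^s_{\lfloor (n+4-s)/2\rfloor+1}$ uniformly and asserting the result), whereas you spell out the automorphism, the off-by-one for $s=1$, and the parity bookkeeping explicitly; your extra care is fine but not a different method.
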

 \begin{proof}

Without loss of generality we may assume that
     $(1,y)=(1,1)$. Consider the standard cycle in $Z_{n,s}$. Then $(1,1)=X^s_1$  and $(2, 1+D)=X^s_{\left\lfloor
     \frac{n+3-s+1}{2}\right\rfloor+1}$. The result follows by the observation that the standard cycle in each case is
     isomorphic to $C_{n+3-s}$.
 \end{proof}

The position function for Case 1 is
\begin{eqnarray}
p_1(\alpha_{2i-1})&=\left(1,1+\omega(i-1)\right) \textrm{ and} \nonumber \\
p_1(\alpha_{2i})&=\left(2,1+D+\omega(i-1)\right),
\end{eqnarray}
where $D$ is as defined in Lemma \ref{lem:diameteraway} and
\[\omega=\left\{
           \begin{array}{ll}
             k, & \hbox{if $n=4k+2$ when $k$ is odd, or $n=4k+1$,} \\
             k+1, & \hbox{if $n=4k+2$ when $k$ is even, or $n=4k+3$.}
           \end{array}
         \right.
\]

\begin{lemma} \label{lem:bijection}
The function $p_1:\{\alpha_j\,|\,j=1,...,2n\}\rightarrow V(Z_{n,s})$ is a bijection.
\end{lemma}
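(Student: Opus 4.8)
The plan is to show that $p_1$ maps the $2n$ indices bijectively onto the $2n$ vertices of $Z_{n,s}$ by separately analyzing its behavior on odd-indexed and even-indexed $\alpha$'s. Observe first that $p_1(\alpha_{2i-1})$ always lies on the first principal cycle and $p_1(\alpha_{2i})$ always lies on the second principal cycle; since these two cycles partition $V(Z_{n,s})$ and each contains exactly $n$ vertices, it suffices to prove that $i \mapsto (1,1+\omega(i-1))$ is a bijection from $\{1,\dots,n\}$ onto the first principal cycle and that $i \mapsto (2, 1+D+\omega(i-1))$ is a bijection from $\{1,\dots,n\}$ onto the second principal cycle. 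Because the second coordinate is read modulo $n$, and adding the constant $1+D$ is just a cyclic shift, both claims reduce to the single statement that the map $i \mapsto \omega(i-1) \pmod n$, $i=1,\dots,n$, is a permutation of $\mathbb{Z}/n\mathbb{Z}$, which holds if and only if $\gcd(\omega,n)=1$.

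So the key step is a number-theoretic verification: in each of the cases defining $\omega$, check $\gcd(\omega,n)=1$. When $n=4k+1$ and $\omega=k$, any common divisor of $k$ and $4k+1$ divides $4k+1-4k=1$. When $n=4k+3$ and $\omega=k+1$, a common divisor of $k+1$ and $4k+3$ divides $4(k+1)-(4k+3)=1$. When $n=4k+2$ with $k$ odd and $\omega=k$, a common divisor of $k$ and $4k+2$ divides $4k+2-4k=2$, but $k$ is odd, so the gcd is $1$. When $n=4k+2$ with $k$ even and $\omega=k+1$, a common divisor $d$ of $k+1$ and $4k+2=4(k+1)-2$ divides $2$; since $k$ is even, $k+1$ is odd, so again $d=1$. (Note $n=4k+2$ with $k$ even and $s=3$ is excluded from Case 1, but the gcd argument does not even need that hypothesis.) In every case $\gcd(\omega,n)=1$, so multiplication by $\omega$ is a bijection on $\mathbb{Z}/n\mathbb{Z}$, hence so is $i\mapsto \omega(i-1)$ on $\{1,\dots,n\}$ viewed as residues.

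Putting the pieces together: the odd-index map enumerates all $n$ vertices of the first principal cycle without repetition, the even-index map enumerates all $n$ vertices of the second principal cycle without repetition, and the two images are disjoint since they lie on disjoint cycles; therefore $p_1$ is injective on a domain of size $2n$ into a codomain of size $2n$, hence a bijection. I do not expect any real obstacle here — the only thing to be careful about is the case split on the parity of $k$ inside $n=4k+2$, since that is exactly what makes the corresponding $\omega$ coprime to $n$; choosing $\omega=k$ when $k$ is even (or $\omega=k+1$ when $k$ is odd) would fail, so the definition of $\omega$ is doing precisely the work needed, and the proof should make that dependence explicit.
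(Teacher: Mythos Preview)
Your proposal is correct and takes essentially the same approach as the paper: both arguments reduce the bijectivity of $p_1$ to the coprimality $\gcd(\omega,n)=1$ and then verify this case by case. The only cosmetic difference is that you split into odd- and even-indexed $\alpha$'s up front and argue directly that multiplication by $\omega$ permutes $\mathbb{Z}/n\mathbb{Z}$, whereas the paper argues by contradiction that $p_1(\alpha_a)=p_1(\alpha_b)$ forces $n\mid(i-j)$; the gcd computations are identical.
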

\begin{proof}

Suppose $p_1(\alpha_a)=p_1(\alpha_b)$ with $a>b$. Let $i=\left\lfloor \frac{a}{2}\right\rfloor$ and $j=\left\lfloor
\frac{b}{2}\right\rfloor$. As $p_1(\alpha_a)$ and $p_1(\alpha_b)$ have the same first coordinate, $a$ and $b$ have the same
parity. Examining the second coordinates we can conclude that $i \omega \equiv j \omega \mod n$ or $(i -j)\omega
\equiv 0 \mod n$.

By Euclid's algorithm, $k$ is co-prime to $4k+1$ and $k$ is co-prime to $4k+2$ when $k$ is odd. Also, $k+1$ is co-prime
to $4k+2$ when $k$ is even and $k+1$ is co-prime to $4k+3$ for all $k$. Thus in all cases gcd$(n,\omega)=1$. As $n$
divides $(i-j)\omega$, it follows that $n$ divides $(i-j)$. But then $(i-j)\geq n$ and thus $a-b\geq 2n$, so $a>2n$, a
contradiction.
 \end{proof}

Lemma \ref{lem:bijection} establishes that the function $c \circ p_1^{-1}:V(Z_{n,s})\rightarrow \textbf{N}$ assigns each vertex
exactly one label. It remains to show that the labeling satisfies the radio condition. The following lemma simplifies many
of the calculations needed.

\begin{lemma}\label{lem:usefulfacts}
 In all cases considered,
 \begin{itemize}
 \item
 \noindent $\phi(n,s)+\omega \geq diam(Z_{n,s})+1$
and

\item $\phi(n,s)-\omega\geq\left\{
           \begin{array}{ll}
             1, & \hbox{if $n-s$ is even} \\
             2, & \hbox{if $n-s$ is odd.}
           \end{array}
         \right.
$
\end{itemize}

\end{lemma}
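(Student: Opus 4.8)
The plan is to prove both bullets by a single finite case analysis, since every quantity appearing in them is determined explicitly by $r$, $s$, and — when $n=4k+2$ — the parity of $k$. In Case 1 we have $r\in\{1,2,3\}$ and $s\in\{1,2,3\}$, the configuration "$n=4k+2$, $k$ even, $s=3$" is excluded, and $Z_{4,3}$ never enters Case 1 at all; only the column $r=2$ of the $\phi$-table interacts with the parity of $k$ (there $\omega=k$ when $k$ is odd and $\omega=k+1$ when $k$ is even, while $\omega=k$ for all $r=1$ and $\omega=k+1$ for all $r=3$). This leaves exactly eleven $(r,s,\text{parity})$-configurations. For each one I would substitute the value of $\phi(n,s)$ from the table in Lemma \ref{lem:phifunct}, the value of $\omega$ from its definition, and $\diam(Z_{n,s})=\lfloor(n+3-s)/2\rfloor$ from Remark \ref{rmk:radius}.

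The first technical step is to remove the floor: writing $n=4k+r$ turns $\lfloor(n+3-s)/2\rfloor$ into an explicit linear function of $k$ for each fixed $(r,s)$. After that, each of the two claimed inequalities becomes a comparison between two linear polynomials in $k$ whose leading coefficients agree, so that checking it for all $k\ge 1$ reduces to comparing constant terms. Many of these comparisons are in fact equalities: $\phi+\omega=\diam+1$ precisely when $\omega$ is the larger of its two possible values, and $\phi-\omega$ equals exactly $1$ when $n-s$ is even in the extremal subcases and exactly $2$ when $n-s$ is odd. I would flag this so the reader sees that the parity-dependent bound $\phi-\omega\ge 1$ (resp.\ $\ge 2$) in the second bullet is sharp and cannot be strengthened.

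The only real difficulty is bookkeeping, so the main obstacle I anticipate is organizational rather than mathematical: one must list all eleven surviving configurations without conflating any of them, be careful to split the $r=2$ column by the parity of $k$, and verify that the one place the second inequality would otherwise fail — namely $r=2$, $k$ even, $s=3$ — is exactly the configuration excluded from Case 1. The arithmetic itself is routine, and, as with the table in Lemma \ref{lem:phifunct}, it would be reasonable to present one or two representative configurations in detail and remark that the remaining verifications are straightforward but tedious.
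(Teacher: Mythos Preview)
Your proposal is correct and is essentially the same finite case analysis the paper carries out: the paper simply presents three tables (for $\diam(Z_{n,s})+1$, for $\phi(n,s)+\omega$, and for $\phi(n,s)-\omega$) over the eleven surviving $(r,s,\text{parity of }k)$ configurations and lets the reader compare entries. One small inaccuracy in your commentary: equality $\phi+\omega=\diam+1$ actually occurs in all the $\omega=k$ subcases and only some of the $\omega=k+1$ subcases, not ``precisely when $\omega$ is the larger'' value; this does not affect the argument.
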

\begin{proof}

First, we give the values of $\diam(Z_{n,s})+1$:
\begin{center}
   \begin{tabular}{c|c|c|c}
   {$diam(Z_{n,s})+1$} & {$s=1$} & {$s=2$} & { $s=3$} \\ \hline
   {$r=1$ } & {$2k+2$} & {$2k+2$} & {$2k+1$} \\
  \hline
     $r=2$  & {$2k+3$} & $2k+2$ &  {$2k+2$} \\
     \hline

  {$r=3$} & {$2k+3$} & {$2k+3$} & {$2k+2$} \\
  \hline
     \end{tabular}
     \end{center}
\medskip
In each case, $\diam(Z_{n,s})+1 \leq \phi(n,s)+\omega$:
\medskip
\begin{center}
   \begin{tabular}{c|c|c|c}
   {$\phi(n,s)+\omega$} & {$s=1$} & {$s=2$} & { $s=3$} \\ \hline
   {$r=1$ } & {$2k+2$} & {$2k+2$} & {$2k+1$} \\
  \hline
     $r=2$, $k$ odd  & {$2k+3$} & $2k+2$ &  {$2k+2$} \\
     \hline
 {$r=2$, $k$ even } & {$2k+4$} & {$2k+3$} & {} \\
  \hline
  {$r=3$} & {$2k+3$} & {$2k+4$} & {$2k+3$} \\
  \hline
     \end{tabular}
     \end{center}
\medskip
The last table shows the values of $\phi(n,s)-\omega$ with the entries corresponding to $n+s \equiv 0 \mod 2$
    in bold.
\medskip
\begin{center}
   \begin{tabular}{c|c|c|c}
   {$\phi(n,s)-\omega$} & {$s=1$} & {$s=2$} & { $s=3$} \\ \hline
   {$r=1$ } & {${\mathbf 2}$} & {$2$} & {${\mathbf 1}$} \\
  \hline
     $r=2$, $k$ odd  & {$3$} & ${\mathbf 2}$ &  {$2$} \\
     \hline
 {$r=2$, $k$ even } & {$2$} & {${\mathbf 1}$} & {} \\
  \hline
  {$r=3$} & {${\mathbf 1}$} & {$2$} & {${\mathbf 1}$} \\
  \hline
     \end{tabular}
     \end{center}
\medskip
\end{proof}

\begin{thm}
The function $c \circ p_1^{-1}:V(Z_{n,s})\rightarrow \textbf{N}$ defines a radio labeling on $Z_{n,s}$ for the values of $n$
and $s$ considered in Case 1.
\end{thm}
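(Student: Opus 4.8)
The plan is to verify that the labeling $c \circ p_1^{-1}$ satisfies the radio condition for every pair of distinct vertices of $Z_{n,s}$. By Lemma \ref{lem:fcol}, any pair $\alpha_l, \alpha_k$ with $|l-k| \geq 4$ automatically satisfies the radio condition, so it remains to treat the pairs $\alpha_l, \alpha_k$ with $|l-k| \in \{1,2,3\}$. I would organize the argument by the value of $|l-k|$, and within each subcase by the parities of $l$ and $k$ (which determine whether the vertices lie on the first principal cycle, the second, or one on each), since the position function $p_1$ has a different formula on odd and even indices.

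First I would handle $|l-k|=2$, i.e.\ pairs $\alpha_{2i-1}, \alpha_{2i+1}$ (both of the form $(1, \cdot)$) and pairs $\alpha_{2i}, \alpha_{2i+2}$ (both of the form $(2,\cdot)$). In both cases the label difference is exactly $\phi(n,s)$ and the two vertices lie on the same principal cycle, at cycle-distance $\min(\omega, n-\omega)$; so I need $\phi(n,s) + \min(\omega, n-\omega) \geq \diam(Z_{n,s})+1$. Since $\omega \leq n - \omega$ in all our cases (as $\omega \le k+1 \le n/2$ roughly), the cycle-distance is $\omega$, and this is exactly the first bullet of Lemma \ref{lem:usefulfacts}. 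Next, for $|l-k|=1$, the pair is $\alpha_{2i-1}, \alpha_{2i}$, which maps to $(1, 1+\omega(i-1))$ and $(2, 1+D+\omega(i-1))$; these differ by a rotation, so their distance equals $d((1,y),(2,y+D)) = \diam(Z_{n,s})$ by Lemma \ref{lem:diameteraway}, and the label difference is $1$, giving $\diam(Z_{n,s}) + 1$ exactly — the tightest pairs, and the whole point of the construction. I would also need the pair $\alpha_{2i}, \alpha_{2i+1}$: here the label difference is $\phi(n,s)-1$ and the vertices are $(2, 1+D+\omega(i-1))$ and $(1, 1+\omega i)$, whose distance I would bound below using the standard cycle; combining with $\phi(n,s)-1$ should clear $\diam(Z_{n,s})+1$, using the second bullet of Lemma \ref{lem:usefulfacts} to make up the slack when the relevant distance is small.

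For $|l-k|=3$, the pairs are $\alpha_{2i-1}, \alpha_{2i+2}$ and $\alpha_{2i}, \alpha_{2i+3}$; here the label difference is $\phi(n,s)+1$, and since $2\phi(n,s) \geq \diam(Z_{n,s})$ (Remark \ref{rmk:phi}) we have $\phi(n,s)+1 \geq \diam(Z_{n,s})/2 + 1$, which is not quite enough on its own, so I would again need a lower bound of roughly $\diam(Z_{n,s})/2$ on the distance between the two vertices. The main obstacle, then, is the distance computations for the pairs where one vertex is on each principal cycle but the two are \emph{not} related by the exact rotation of Lemma \ref{lem:diameteraway} — namely the $|l-k|=1$ pair $\alpha_{2i},\alpha_{2i+1}$ and the $|l-k|=3$ pairs. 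For these I would work inside the appropriate standard cycle (which by Remark \ref{rmk:standardtightcycles} realizes distances faithfully and has diameter $\diam(Z_{n,s})$), express both vertices via their $X^s_j$ labels, and read off the cycle-distance as a function of $\omega$, $D$, $n$, and $s$; the parity split in Lemma \ref{lem:usefulfacts} (according to whether $n-s$ is even or odd) is exactly what is needed to absorb the off-by-one discrepancies that arise from the floor functions in $D$ and $\omega$. Once each of these finitely many index-difference/parity subcases is checked, the radio condition holds for all pairs and the theorem follows.
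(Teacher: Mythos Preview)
Your overall structure matches the paper's proof closely: reduce to $|l-k|\le 3$ via Lemma~\ref{lem:fcol}, then split by parity and handle each subcase using Lemma~\ref{lem:diameteraway}, the two bullets of Lemma~\ref{lem:usefulfacts}, and the standard cycle. The cases $|l-k|=1$ (pair $\alpha_{2i-1},\alpha_{2i}$) and $|l-k|=2$ are handled exactly as in the paper.

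There is, however, a concrete error in your $|l-k|=3$ case. You assert that both pairs $\alpha_{2i-1},\alpha_{2i+2}$ and $\alpha_{2i},\alpha_{2i+3}$ have label difference $\phi(n,s)+1$. That is correct for the first pair, but for the second,
\[
c(\alpha_{2i+3})-c(\alpha_{2i}) = \bigl(1+(i+1)\phi(n,s)\bigr) - \bigl(2+(i-1)\phi(n,s)\bigr) = 2\phi(n,s)-1.
\]
This matters because your plan for that pair is to extract a distance bound of roughly $\diam(Z_{n,s})/2$ from the standard cycle, and that bound is simply false: e.g.\ for $n=4k+1$, $s=3$ one has $\alpha_2=(2,2k+2)$ and $\alpha_5=(1,2k+1)$, which are adjacent. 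The paper instead uses the correct label difference $2\phi(n,s)-1$ together with the trivial bound $d(\alpha_{2i},\alpha_{2i+3})\ge 1$ and Remark~\ref{rmk:phi}. For the other $|l-k|=3$ pair $(\alpha_{2i-1},\alpha_{2i+2})$, the paper also takes a shortcut you do not mention: rather than locating both vertices on the standard cycle, it uses the triangle inequality $d(\alpha_1,\alpha_4)\ge d(\alpha_1,\alpha_2)-d(\alpha_2,\alpha_4)=\diam(Z_{n,s})-\omega$, which combined with the label difference $\phi(n,s)+1$ and Lemma~\ref{lem:usefulfacts} finishes immediately. Your standard-cycle computation would also work here, but is more laborious.
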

\begin{proof}
 By Lemma
\ref{lem:vexdis4AP} it is enough to check that all pairs of vertices in the set $\alpha_j,.., \alpha_{j+3}$ satisfy the radio condition. As $d(\alpha_j, \alpha_{j+a})$ depends only on $a$ and on the parity of $j$, it is enough to check all pairs of the form $(\alpha_{2i-1}, \alpha_{2i-1+a})$ and all pairs of the form $(\alpha_{2i}, \alpha_{2i+a})$ for $a \leq 3$ and for some $i$. To simplify the computations, we will check these pairs in the case when $i=1$. For the convenience of the reader, we give the coordinates and the labels of the relevant vertices.

\begin{center}
\begin{tabular}{l|l|l}
 & vertex & label value \\
\hline
$\alpha_1$ & $(1,1)$ & $1$ \\
$\alpha_2$ & $(2,1+D)$ & $2$ \\
$\alpha_3$ & $(1,1+\omega)$ & $1+\phi(n,s)$ \\
$\alpha_4$ & $(2,1+D+\omega)$ & $2+\phi(n,s)$ \\
$\alpha_5$ & $(1,1+2\omega)$ & $1+2\phi(n,s)$ \\
\end{tabular}
\end{center}

{\bf Pair $(\alpha_1, \alpha_2)$:} By Lemma
 \ref{lem:diameteraway}, $d(\alpha_1,\alpha_2)=\diam(Z_{n,s})$. Thus
 $d(\alpha_1,\alpha_2)+c(\alpha_2)-c(\alpha_1)=\diam(Z_{n,s})+1$, as required.

{\bf Pairs $(\alpha_1, \alpha_3)$ and $(\alpha_2, \alpha_4)$:} Note that $\alpha_1$ and $\alpha_3$ lie on the same principal
$n$-cycle and this cycle is tight by Remark \ref{rmk:principletightcycles}. Thus 
\[d(\alpha_1, \alpha_3)+c(\alpha_3)-c(\alpha_1)=\omega+\phi(n,s)\geq \diam
(Z_{n,s})+1.\] 
The last inequality follows by Lemma \ref{lem:usefulfacts}. The relationship between $\alpha_2$ and
$\alpha_4$ is identical.

{\bf Pair $(\alpha_1, \alpha_4)$:} Note that 
\[d(\alpha_1, \alpha_4)\geq d(\alpha_1, \alpha_2)-d(\alpha_2,
\alpha_4)=\diam(Z_{n,s})-\omega.\] Thus 
\[d(\alpha_1, \alpha_4)+c(\alpha_4)-c(\alpha_1)\geq
\diam(Z_{n,s})-\omega+\phi(n,s)+1 \geq \diam(Z_{n,s})+2,\] 
where the last inequality follows by Lemma
\ref{lem:usefulfacts}.

{\bf Pair $(\alpha_2, \alpha_3)$:} By subtracting $\omega$ from the second coordinate, we see $d(\alpha_2,
\alpha_3)=d\left((1,1),(2,1+D-\omega)\right)$.

When considered in the standard cycle, these vertices correspond to $X^1_1$ and $X^1_{2+D-\omega}$ if $s=1$ and to
$X^k_1$ and $X^s_{1+D-\omega}$ if $s=2$ or $3$. As by Remark \ref{rmk:standardtightcycles} the standard cycle in each case is $X_1^s$-tight, we have

\[d(\alpha_2, \alpha_3)=
\left\{
           \begin{array}{ll}
             D-\omega+1=\left\lfloor \frac{n+3}{2}\right\rfloor-\omega, & \hbox{$s=1$,}  \\
             D-\omega=\left\lfloor \frac{n+2}{2}\right\rfloor-\omega, &
             \hbox{$s=2$,}\\
              D-\omega=\left\lfloor
              \frac{n+1}{2}\right\rfloor-\omega, & \hbox{$s=3.$}
           \end{array}
         \right.\]
Thus in all cases $d(\alpha_2, \alpha_3)= \left\lfloor
              \frac{n+3-s+1}{2}\right\rfloor-\omega$, so
\begin{eqnarray}
d(\alpha_2, \alpha_3)&\geq \left\lfloor \frac{n+3-s+1}{2}\right\rfloor-\omega\\
&=
\left\{
           \begin{array}{ll}
          \diam(Z_{n,s})+1-\omega, & \hbox{$n-s$ even,}  \nonumber \\
       \diam(Z_{n,s})-\omega, & \hbox{$n-s$ odd.} \nonumber \\
           \end{array}
         \right.
\end{eqnarray}

By Lemma \ref{lem:usefulfacts},
$\phi(n,s)-\omega \geq 1$ when $n-s$ is even and $\phi(n,s)-\omega \geq 2$ when $n-s$ is odd. Thus
\begin{eqnarray}
d(\alpha_2, \alpha_3)+c(\alpha_3)-c(\alpha_2)&\geq \left\lfloor
\frac{n+3-s+1}{2}\right\rfloor-\omega +(\phi(n,s)-1)\nonumber \\
&\geq \left\{
           \begin{array}{ll}
          \diam(Z_{n,s})+1+1-1, & \hbox{$n-s$ even,} \nonumber  \\
       \diam(Z_{n,s})+2-1, & \hbox{$n-s$ odd.} \nonumber  \\
           \end{array}
         \right.
\end{eqnarray}

{\bf Pair $(\alpha_2, \alpha_5)$:} As $|c(\alpha_5)-c(\alpha_2)|=2\phi(n,s)-1$ and $d(\alpha_2,\alpha_5)\geq1$, it
follows that $|c(\alpha_5)-c(\alpha_2)|+d(\alpha_2,\alpha_5)\geq 2\phi(n,s)$, and so by Remark \ref{rmk:phi},
$|c(\alpha_5)-c(\alpha_2)|+d(\alpha_2,\alpha_5)\geq \diam(Z_{n,s})+1.$

         This establishes that $c \circ p_1^{-1}$ is a radio labeling
         of $Z_{n,s}$.
 \end{proof}

\medskip
%***********************ALGORITHM 2*****************************************
\begin{center}
\textbf{Case 2: $n=4k$, $s=1$ or $3$}
\end{center}
In this case the position function is

\begin{eqnarray}
p_2(\alpha_{2i-1})&=(1+l_i, 1+k(i-1)-l_i),\textrm{ and}\nonumber \\
p_2(\alpha_{2i})&=(2+l_i, 1+k(i+1)-l_i),
\end{eqnarray}
where
$l_i=\left\lfloor \frac{i-1}{4}\right\rfloor$.

To simplify notation, we will also denote the value of $l$ associated to a particular vertex $v$ by $l_{(v)}$. Note
that $l_{(\alpha_{2n})}=l_n=l_{4k}=\left\lfloor \frac{4k-1}{4}\right\rfloor\leq k-1$.

\begin{lemma} \label{lem:bijection2}
The function $p_2:\{\alpha_j \,|\, j=1,..,2n\}\rightarrow V(Z_{n,s})$ is a bijection.
\end{lemma}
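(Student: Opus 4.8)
The plan is to show that the map $p_2$ defined by
\[p_2(\alpha_{2i-1})=(1+l_i,\,1+k(i-1)-l_i),\qquad p_2(\alpha_{2i})=(2+l_i,\,1+k(i+1)-l_i),\]
with $l_i=\lfloor (i-1)/4\rfloor$ and $n=4k$, is a bijection from $\{\alpha_j\}$ onto $V(Z_{n,s})$. Since the domain and codomain both have exactly $2n$ elements, it suffices to prove injectivity. First I would record the basic ranges: as $i$ runs over $1,\dots,2n=8k$, the quantity $l_i=\lfloor(i-1)/4\rfloor$ runs over $0,1,\dots,2k-1$, taking each value exactly four times; and the first coordinates $1+l_i$ and $2+l_i$ are to be read modulo $2$, so they split the $\alpha_j$'s by the parity of $l_i$ together with the parity of $j$ (odd-indexed $\alpha$'s land on cycle $1+l_i\bmod 2$, even-indexed ones on cycle $2+l_i\bmod 2$). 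The second coordinate is read modulo $n=4k$.

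Next I would set up the injectivity argument exactly as in Lemma~\ref{lem:bijection}. Suppose $p_2(\alpha_a)=p_2(\alpha_b)$ with $a>b$; write $a=2i$ or $2i-1$ and $b=2j$ or $2j-1$. Equality of first coordinates mod $2$ forces a congruence relating the parities of $l_i$, $l_j$ and of $a$, $b$; I expect this to pin down the parities of $a$ and $b$ to be equal, so both are even or both odd, and correspondingly $l_i\equiv l_j\pmod 2$. Then equality of the second coordinates gives a congruence mod $4k$ of the form $k(i-j)-(l_i-l_j)\equiv 0\pmod{4k}$ (or the analogous one with $i+1$, $j+1$, which yields the same thing). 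The key observation is that $l_i-l_j=\lfloor(i-1)/4\rfloor-\lfloor(j-1)/4\rfloor$ is ``small'' relative to $i-j$: writing $i-j=4q+t$ with $0\le t\le 3$, one has $l_i-l_j$ equal to $q$ or $q\pm1$ up to the residues of $i,j$ mod $4$. Substituting, the congruence becomes $k(4q+t)-(l_i-l_j)\equiv 0\pmod{4k}$, i.e. $kt-(l_i-l_j)\equiv 0\pmod{4k}$ after absorbing $4kq$. Since $0\le t\le 3$ we have $0\le kt\le 3k$, and $l_i-l_j$ is bounded in absolute value by roughly $\lceil (i-j)/4\rceil\le \lceil(2n-1)/4\rceil$; the task is to show the only way $kt-(l_i-l_j)$ can be a multiple of $4k$ is for it to be $0$, and then to deduce $t=0$ and $l_i=l_j$, hence (using $l$ determines $i$ up to its block of $4$, and the second coordinate then determines $i$ exactly) $a=b$.

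The main obstacle I anticipate is the bookkeeping in the last step: $l_i$ is a step function, so $l_i-l_j$ is not simply $(i-j)/4$, and one must handle the residues of $i-1$ and $j-1$ modulo $4$ carefully to get a tight enough bound to rule out nonzero multiples of $4k$ and to rule out $t\in\{1,2,3\}$. I would organize this by cases on $t\in\{0,1,2,3\}$ (equivalently on $a-b\bmod 8$, since $a,b$ have the same parity): for each residue compute the possible values of $l_i-l_j$ as $i,j$ vary within their congruence classes, verify that $kt-(l_i-l_j)$ lies strictly between $-4k$ and $4k$ except when it is $0$, and check that $0$ forces $t=0$. Once $t=0$ and $l_i=l_j$, the second-coordinate equation reduces to $k(i-j)\equiv 0\pmod{4k}$ plus the fact that $i-j\in\{-3,\dots,3\}$ and $4\mid(i-j)$ — wait, more carefully, $l_i=l_j$ together with $i>j$ forces $i$ and $j$ in the same block of four, so $|i-j|\le 3$, and then $k(i-j)\equiv 0\pmod{4k}$ gives $i=j$, whence $a=b$, contradicting $a>b$. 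This mirrors the structure of the proof of Lemma~\ref{lem:bijection} and the subsequent $l_{(\alpha_{2n})}\le k-1$ remark, which guarantees no wraparound issues at the top end.
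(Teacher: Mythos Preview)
Your proposal contains two concrete errors that undermine the argument. First, the index $i$ in the definition of $p_2$ does \emph{not} run over $1,\dots,2n=8k$: the formulas are for $p_2(\alpha_{2i-1})$ and $p_2(\alpha_{2i})$, so as $j$ runs over $1,\dots,2n$ the index $i=\lceil j/2\rceil$ runs only over $1,\dots,n=4k$. Consequently $l_i=\lfloor(i-1)/4\rfloor$ ranges over $0,\dots,k-1$, not $0,\dots,2k-1$ (this is exactly the remark $l_{(\alpha_{2n})}\le k-1$ preceding the lemma). With your stated bound $|l_i-l_j|\le 2k-1$, the quantity $kt-(l_i-l_j)$ can lie anywhere in $[-(2k-1),\,5k-1]$, which contains both $0$ and $4k$, so your ``only multiple of $4k$ is $0$'' step fails; with the correct bound $|l_i-l_j|\le k-1$ it lies in $[-(k-1),\,4k-1]$ and the step goes through.

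Second, your expectation that equality of first coordinates forces $a$ and $b$ to have the same parity is wrong: if $a$ is even and $b$ is odd, the first coordinates are $2+l_{(a)}$ and $1+l_{(b)}$, which agree mod~$2$ precisely when $l_{(a)}\not\equiv l_{(b)}\pmod 2$. So the mixed-parity case is not vacuous and must be handled separately. The paper dispatches it by reducing the second-coordinate congruence mod~$k$ to get $l_{(a)}\equiv l_{(b)}\pmod k$, hence $|l_{(a)}-l_{(b)}|\ge k$ (since they are unequal), contradicting $|l_{(a)}-l_{(b)}|\le k-1$.

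More broadly, the paper's route is shorter than your $i-j=4q+t$ casework: from $k(i-j)+(l_{(b)}-l_{(a)})\equiv 0\pmod{4k}$, reduce mod~$k$ to get $l_{(a)}\equiv l_{(b)}\pmod k$, use the bound $\le k-1$ to force $l_{(a)}=l_{(b)}$, then $k(i-j)\equiv 0\pmod{4k}$ gives $4\mid(i-j)$, which contradicts $l_{(a)}=l_{(b)}$ unless $i=j$. Your decomposition can be made to work once the range is corrected, but the mod-$k$ reduction avoids the case analysis entirely.
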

\begin{proof}
 To show that $p_2$ is a bijection, suppose that
$p_2(\alpha_a)=p_2(\alpha_b)$ and let $i=\left\lfloor \frac{a}{2}\right\rfloor$ and $j=\left\lfloor \frac{b}{2}\right\rfloor$. Suppose first
that $a$ and $b$ are even. Then
\[1+k(i-1)-l_{(a)} \equiv 1+k(j-1)-l_{(b)} \mod n.\] Thus
\[k(i-j)+l_{(b)}-l_{(a)} \equiv 0 \mod 4k,\] so $l_{(b)}-l_{(a)}
\equiv 0 \mod  k$. As $l_{(b)}-l_{(a)}\leq k-1$, this implies that $l_{(b)}=l_{(a)}$. Then we have that
\[k(i-j)+l_{(b)}-l_{(a)}=k(i-j) \equiv 0 \mod 4k,\] and thus
$(i-j)\geq 4$ or $a-b \geq 8$. However, $a-b \geq 8$ implies that $l_{(b)}\neq l_{(a)}$, a contradiction. The argument
when $a$ and $b$ are odd is similar.

Suppose then that $a$ is even and $b$ is odd. Then $1+l_{(b)} \equiv 2+l_{(a)} \mod 2$ shows that $l_{(a)}$ and
$l_{(b)}$ have different parity and in particular $l_{(a)}-l_{(b)} \neq 0$. On the other hand, considering the second
coordinates of $p_2(\alpha_a)$ and $p_2(\alpha_b) \mod k$, we deduce that $1-l_{(a)} \equiv 1-l_{(b)} \mod k$ or $l_{(a)}
-l_{(b)} \equiv 0 \mod k$. As $l_{(a)}-l_{(b)} \neq 0$ it follows that $|l_{(a)} -l_{(b)}|\geq k$, a contradiction.
 \end{proof}

\begin{lemma}
The function $c \circ p_2^{-1}:V(Z_{n,s})\rightarrow \textbf{N}$ defines a radio labeling on $Z_{4k,1}$ and $Z_{4k,3}$.

\end{lemma}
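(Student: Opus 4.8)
The plan is to mimic the structure of the Case 1 proof. By Lemma~\ref{lem:vexdis4AP} it suffices to verify the radio condition for pairs $(\alpha_l,\alpha_k)$ with $|l-k|\le 3$ (Lemma~\ref{lem:fcol} handles $|l-k|\ge 4$), and since $d(\alpha_j,\alpha_{j+a})$ depends only on $a$ and on the parity of $j$ together with the value $l_i$ attached to the index, I would organize the check around the six pair-types: $(\alpha_{2i-1},\alpha_{2i})$, $(\alpha_{2i-1},\alpha_{2i+1})$, $(\alpha_{2i-1},\alpha_{2i+2})$, $(\alpha_{2i},\alpha_{2i+1})$, $(\alpha_{2i},\alpha_{2i+2})$, and $(\alpha_{2i-1},\alpha_{2i+3})$ (or $(\alpha_{2i},\alpha_{2i+3})$). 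Here $n=4k$ and $\diam(Z_{4k,s})=\lfloor\frac{4k+3-s}{2}\rfloor$, which is $2k+1$ for $s=1$ and $2k$ for $s=3$; also $\phi(4k,s)=k+2$ in both cases, so $2\phi-\diam\ge 2$, and the consecutive-integer labels $c(\alpha_{2i-1}),c(\alpha_{2i})$ differ by $1$ while $c(\alpha_{2i+2})-c(\alpha_{2i})=\phi=k+2$.

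The key computations are the distances between the positions $p_2(\alpha_{2i-1})=(1+l_i,\,1+k(i-1)-l_i)$ and $p_2(\alpha_{2i})=(2+l_i,\,1+k(i+1)-l_i)$. First I would record that for indices in the same block of four ($l_i=l_{i+1}$, which holds whenever $2i-1,2i,2i+1,2i+2$ share a block), the second coordinates of consecutive odd-indexed vertices differ by exactly $k$ and the first coordinates agree, so these vertices lie on a common principal cycle (tight by Remark~\ref{rmk:principletightcycles}); thus $d(\alpha_{2i-1},\alpha_{2i+1})=\min(k,4k-k)=k$ when no block boundary is crossed, and the radio sum is $k+\phi=2k+2\ge\diam+1$. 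When a block boundary \emph{is} crossed, $l$ increases by $1$, which shifts the first coordinate by $1$ and the second by an extra $-1$; I would bound these distances by comparing with the no-shift case (the first coordinate changing by $1$ can only change the distance by at most a bounded amount, using that the standard cycle is $X_1^s$-tight as in Remark~\ref{rmk:standardtightcycles}), and show the small perturbation is absorbed by the slack $2\phi-\diam\ge 2$. For the pair $(\alpha_{2i-1},\alpha_{2i})$ I would compute that the positions are $(1+l_i,1+k(i-1)-l_i)$ and $(2+l_i,1+k(i+1)-l_i)$, whose second coordinates differ by $2k$; placing both on the appropriate standard $(n+3-s)$-cycle and using tightness, $d$ should come out to $\diam(Z_{4k,s})$ (paralleling Lemma~\ref{lem:diameteraway}), giving radio sum $\diam+1$ exactly. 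The pairs spanning distance $a=2$ and $a=3$ in index have label gap $\ge\phi-1=k+1$ or $\ge 2\phi-1$, so they need only $d\ge 1$ (respectively $d\ge 0$), handled as in the $(\alpha_1,\alpha_4)$ and $(\alpha_2,\alpha_5)$ cases of Case~1 via Remark~\ref{rmk:phi}.

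The main obstacle I anticipate is the careful bookkeeping at block boundaries: unlike Case~1 where $\omega$ is a single fixed step, here the step alternates between "stay in block" (pure $+k$ or $+2k$ shift, clean cycle argument) and "cross to next block" (an extra $\pm1$ in both coordinates), and one must check that crossing does not push any vertex off the relevant tight standard cycle and does not erode the distance below what the slack can cover. I would isolate this by proving a small auxiliary estimate: for vertices of the form $(1+l,\,c-l)$ and $(1+l',\,c'-l')$ with $|l-l'|\le 1$, the distance differs from $d((1,c),(1,c'))$ (or the corresponding standard-cycle distance) by at most $1$; granting that, every pair's radio sum exceeds $\diam+1$ by the computations above, completing the proof. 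Since $l_{(\alpha_{2n})}=l_{4k}\le k-1$, all second coordinates stay in a controlled range and no wraparound degeneracies occur beyond those already accounted for.
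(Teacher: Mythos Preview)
Your outline has a genuine gap in the treatment of the ``middle'' pairs. The pair $(\alpha_{2i},\alpha_{2i+1})$ has label gap $c(\alpha_{2i+1})-c(\alpha_{2i})=\phi-1=k+1$, so for $s=1$ you need $d\ge (2k+2)-(k+1)=k+1$, not $d\ge 1$; similarly $(\alpha_{2i},\alpha_{2i+2})$ and $(\alpha_{2i-1},\alpha_{2i+1})$ have label gap $\phi=k+2$ and need $d\ge k$, and $(\alpha_{2i-1},\alpha_{2i+2})$ has gap $k+3$ and needs $d\ge k-1$. Only the pair $(\alpha_{2i},\alpha_{2i+3})$ has gap $2\phi-1=2k+3>\diam$ and gets by with $d\ge 1$. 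In short, four of your six pair-types are \emph{tight}: the radio sum comes out to exactly $2k+2$ with no slack at all, so the analogy with the Case~1 pairs $(\alpha_1,\alpha_4)$ and $(\alpha_2,\alpha_5)$ (which had slack coming from Lemma~\ref{lem:usefulfacts}) breaks down.

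Because there is no slack, your proposed auxiliary estimate ``distance differs from the same-block value by at most $1$'' is too weak at block boundaries: losing even one unit of distance would violate the radio condition. What actually happens is an exact cancellation specific to the $Z_{4k,1}$ metric $d\big((x_1,y_1),(x_2,y_2)\big)=|x_2-x_1|+\min\{|y_2-y_1|,4k-|y_2-y_1|\}$: when $l$ jumps by $1$, the first coordinate changes by $\pm 1$ and the second coordinate shifts by $\mp 1$, and these contributions offset perfectly, leaving the distance unchanged. You need this exact computation, not an inequality. The paper exploits this by first reducing the $s=3$ case to $s=1$ (since $d_{Z_{n,3}}(u,v)\ge d_{Z_{n,1}}(u,v)-1$ while $\diam(Z_{4k,3})=\diam(Z_{4k,1})-1$), and then, for $s=1$ only, using the explicit product-metric formula above to tabulate all six pair-types in both the $l_{(u)}=l_{(v)}$ and $|l_{(u)}-l_{(v)}|=1$ sub-cases and verify $d+|c(u)-c(v)|\ge 2k+2$ directly. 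Your standard-cycle approach can be made to work, but only if you replace the loose perturbation bound with this exact bookkeeping.
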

\begin{proof}
 The inequality the function must
satisfy when applied to $Z_{4k,1}$ is $d(u,v)+|c(u)-c(v)|\geq \diam(Z_{4k,1})+1=2k+2$. For $Z_{4k,3}$, the
corresponding inequality is $d(u,v)+|c(u)-c(v)|\geq \diam(Z_{4k,3})+1=2k+1$.

For both $Z_{4k,1}$ and $Z_{4k,3}$, $\phi(n,s)=k+2$, thus $c_{Z_{4k,1}}(u)=c_{Z_{4k,3}}(u)$.

 If $u$ and $v$ are in the same principal cycle, then $d_{Z_{n,3}}(u,v)=
d_{Z_{n,1}}(u,v)$, as principal cycles are always tight. If $u$ and $v$ are on different principal cycles, it is easy
to verify that $d_{Z_{n,3}}(u,v)=d_{Z_{n,1}}(u,v)-1$ by comparing the standard $u$-tight cycles on the two graphs. Thus we can conclude that $d_{Z_{n,3}}(u,v)\geq d_{Z_{n,1}}(u,v)-1$ and so if the radio condition is satisfied by $c_{Z_{n,1}}$, the corresponding radio condition is satisfied by
$c_{Z_{n,3}}$. We will check the radio condition assuming that $s=1$.

As before, it suffices to check that the radio condition holds for all pairs of the form $(\alpha_{2i-1}, \alpha_{2i-1+a})$ and all pairs of the form $(\alpha_{2i}, \alpha_{2i+a})$ for $a \leq 3$. For the convenience of the reader, the relevant values of $p_2$ and $c$ are provided below.

\begin{center}
\begin{tabular}{l|l|l}
 & vertex & label value \\
\hline
$\alpha_{2i-1}$ & $(1+l_{i},1+k(i-1)-l_{i})$ & $1+(i-1)(k+2)$ \\
$\alpha_{2i}$ & $(2+l_{i},1+k(i+1)-l_{i})$ & $2+(i-1)(k+2)$ \\
$\alpha_{2(i+1)-1}$ & $(1+l_{i+1},1+ki-l_{i+1})$ & $1+i(k+2)$ \\
$\alpha_{2(i+1)}$ & $(2+l_{i+1},1+k(i+2)-l_{i+1})$ & $2+i(k+2)$ \\
$\alpha_{2(i+2)-1}$ & $(1+l_{i+2},1+k(i+1)-l_{i+2})$ & $1+(i+1)(k+2)$ \\
\end{tabular}
\end{center}

Note that $l_{(\alpha_{r+a})}-l_{(\alpha_{r})}=0$ or $1$ whenever $a
\leq 3$. As $s=1$, $d\left((x_1, y_1)(x_2,y_2)\right)=|x_2-x_1|+\textrm{min} \{|y_2-y_1|,4k-|y_2-y_1|\}$. The following table has
been generated using this equation.

\begin{center}
\begin{tabular}{c||c|c|c}
{} & {$d(u,v)$} & {$d(u,v)$} & {}\\ {vertex pair} & {$|l_{(u)}-l_{(u)}|=0$} & {$|l_{(u)}-l_{(v)}|=1$}& {$|c(u)-c(v)|$}
\\
 \hline \hline
$(\alpha_{2i-1}, \alpha_{2i})$ & {$1+2k$} & {} & $1$
\\\hline

{$(\alpha_{2i-1}, \alpha_{2(i+1)-1})$} & {$0+k$} &{$1+(k-1)$} & $k+2$\\ \hline

{$(\alpha_{2i-1}, \alpha_{2(i+1)})$} & {$1+k$} &  {$0+(k+1)$}& $k+3$
\\ \hline

{$(\alpha_{2i}, \alpha_{2(i+1)-1})$} & {$1+k$} & {$0+(k+1)$}& $k+1$
\\ \hline

{$(\alpha_{2i}, \alpha_{2(i+1)})$} & {$0+k$} & {$1+(k-1)$} &$k+2$ \\ \hline

{$(\alpha_{2i}, \alpha_{2(i+2)-1})$} & {$1+0$} & {$0+1$} & $2k+3$\\ \hline

\end{tabular}
\end{center}

It is straightforward to verify that in each case, $d(u,v)+|c(u)-c(v)|\geq 2k+2$.
 \end{proof}
\medskip

%**********************ALGORITHM 3******************
\begin{center}
\textbf{Case 3: $n=4k$, $s=2$}
\end{center}

The position function for this case is
\begin{eqnarray}
p_3(\alpha_{2i-1})&=(i,1+k(i-1)-l_i),\textrm{ and}\nonumber \\
p_3(\alpha_{2i})&=(i, 1+k(i+1)-l_i),
\end{eqnarray}

where $l_i= \left\lfloor \frac{i-1}{2} \right\rfloor$. Note that $l_{2n}=l_{4k}=\left\lfloor \frac{4k-1}{2}\right\rfloor \leq 2k-1$.

\begin{lemma} \label{lem:bijection3}
The function $p_3:\{\alpha_j\,|\,j=1,..,2n\} \rightarrow V(Z_{n,s})$ is a bijection.
\end{lemma}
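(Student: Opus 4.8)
The plan is to mirror the proofs of Lemmas~\ref{lem:bijection} and~\ref{lem:bijection2}. Since the domain $\{\alpha_j\,|\,j=1,\dots,2n\}$ and the codomain $V(Z_{4k,2})$ both have $2n=8k$ elements, it suffices to show that $p_3$ is injective, so I would suppose $p_3(\alpha_a)=p_3(\alpha_b)$ with $a\neq b$ and derive a contradiction. Writing $i$ for the first coordinate of $p_3(\alpha_a)$ and $j$ for that of $p_3(\alpha_b)$ (so $a\in\{2i-1,2i\}$ and $b\in\{2j-1,2j\}$), recall that $l_i=\left\lfloor\frac{i-1}{2}\right\rfloor$ and $l_j=\left\lfloor\frac{j-1}{2}\right\rfloor$ both lie in $\{0,1,\dots,2k-1\}$, and note that if $a$ and $b$ have the same parity then $a\neq b$ forces $i\neq j$.

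First I would extract two congruences from the equality $p_3(\alpha_a)=p_3(\alpha_b)$. Both $p_3(\alpha_{2i-1})$ and $p_3(\alpha_{2i})$ have first coordinate $i$, and the first coordinate is read modulo~$2$, so equality of the first coordinates gives $i\equiv j\mod 2$. Every second coordinate has the shape $1+(\text{a multiple of }k)-l$, so reducing the equality of second coordinates modulo~$k$ gives $l_i\equiv l_j\mod k$. This is the single point where the case $n=4k$, $s=2$ genuinely differs from Lemma~\ref{lem:bijection2}: there one had $l_i\le k-1$, so $l_i\equiv l_j\mod k$ already forced $l_i=l_j$, whereas here $l_i$ may be as large as $2k-1$, so one only gets $l_i-l_j\in\{-k,0,k\}$ and the possibility $l_i-l_j=\pm k$ has to be ruled out as well.

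Then I would split into cases on the parities of $a$ and $b$ and, within each, on whether $l_i-l_j$ equals $0$ or $\pm k$. Reading the equality of second coordinates modulo~$4k$ yields $k(i-j)\equiv l_i-l_j\mod 4k$ when $a$ and $b$ have the same parity and $k(i-j+2)\equiv l_i-l_j\mod 4k$ when they have opposite parity, the extra $2$ arising from comparing the $1+k(i+1)$ in $p_3(\alpha_{2i})$ with the $1+k(j-1)$ in $p_3(\alpha_{2j-1})$. In the sub-case $l_i=l_j$, these congruences force $i-j\equiv 0\mod 4$ (same parity) or $i-j\equiv 2\mod 4$ (opposite parity); the former contradicts $l_i=l_j$ because $|i-j|\ge 4$ forces $|l_i-l_j|\ge 2$, and the latter contradicts $l_i=l_j$ because $\left\lfloor\frac{i-1}{2}\right\rfloor=\left\lfloor\frac{j-1}{2}\right\rfloor$ forces $|i-j|\le 1$. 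In the sub-cases $l_i-l_j=\pm k$, the same congruences modulo~$4k$ force $i-j$ to be odd, contradicting $i\equiv j\mod 2$. Since every branch ends in a contradiction, $p_3$ is injective and hence a bijection.

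I expect the main obstacle to be exactly this bookkeeping: ruling out $l_i-l_j=\pm k$, and treating the opposite-parity case, is where the argument departs from Lemma~\ref{lem:bijection2}, and it is essential to use the mod-$2$ information from the first coordinates together with the full mod-$4k$ (not merely mod-$k$) information from the second coordinates. The remaining branches are routine modular arithmetic of the same flavor as in the earlier bijection lemmas.
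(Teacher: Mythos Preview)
Your proposal is correct and follows essentially the same route as the paper's proof: both extract $i\equiv j\pmod 2$ from the first coordinate and $l_i\equiv l_j\pmod k$ from the second, then split into cases on the parity of $a,b$ and on whether $l_i-l_j$ is $0$ or $\pm k$, finishing each branch with the full mod-$4k$ (or mod-$2k$) congruence on the second coordinate. Your organization is slightly cleaner in that you invoke the first-coordinate parity constraint once up front and apply it uniformly to dispose of the $l_i-l_j=\pm k$ sub-cases, whereas the paper brings that information in separately within each sub-case, but the content is identical.
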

\begin{proof}

Suppose that $p_3(\alpha_a)=p_3(\alpha_b)$ and let $i=\left\lfloor \frac{a}{2} \right\rfloor$ and $j=\left\lfloor \frac{b}{2}
\right\rfloor$.  First suppose $a$ and $b$ have the same parity, say even. Then
\[1+k(i+1)-l_{(a)} \equiv 1+k(j+1)-l_{(b)} \mod n.\] Thus
\[k(i-j)+l_{(b)}-l_{(a)} \equiv 0 \mod 4k,\]
so $l_{(b)}-l_{(a)}
\equiv 0\mod k$. As $|l_{(a)}-l_{(b)}|\leq 2k-1$, this implies that $l_{(a)}=l_{(b)}$ or that $l_{(a)}=l_{(b)}+k$.
In the first case it follows that
\[k(i-j)+l_{(b)}-l_{(a)}=k(i-j) \equiv 0 \mod 4k\] and thus
$(i-j)\geq 4$ or $a-b \geq 8$. However, $a-b \geq 8$ implies that $l_{(b)}\neq l_{(a)}$, a contradiction.

If $l_{(a)}=l_{(b)}+k$, it follows that
\[k(i-j)+l_{(b)}-l_{(a)}=k(i-j-1) \equiv 0 \mod 4k,\]
and thus $4$ divides $i-j-1$. We conclude that
$i-j-1$ is even and thus $i-j$ is odd. It follows that $i$ and $j$ have different parities. But in this case
$p_3(\alpha_a)$ and $p_3(\alpha_b)$ have different first coordinates, so $p_3(\alpha_a) \neq p_3(\alpha_b)$. The argument when $a$ and $b$ are odd is similar.

Suppose then that $a$ is even and $b$ is odd.  Considering the second coordinate of $p_3(\alpha_a)-p_3(\alpha_b)$ $ \mod k$ gives that $l_{(b)}-l_{(a)} \equiv 0\mod k$. As $|l_{(a)}-l_{(b)}| \leq 2k-1$, we again conclude that
$l_{(a)}=l_{(b)}$ or $l_{(a)}=l_{(b)}+k$. In the first case, considering the second coordinate of
$p_3(\alpha_a)-p_3(\alpha_b)$ $\mod 2k$, we conclude $k(i-j) \equiv 0 \mod 2k$, so $(i-j)\geq 2$. This however
implies that $l_{(b)}\neq l_{(a)}$, a contradiction. If $l_{(a)}=l_{(b)}+k$, then, should the second coordinate of
$p_3(\alpha_a)-p_3(\alpha_b)$ be congruent to 0 $\mod 4k$, we'd have $2k(i-j-1) \equiv 0 \mod 4k$, so $(i-j-1)$ is even. Again this shows that $p_3(\alpha_a)$ and $p_3(\alpha_b)$ have different first coordinates, so can not be equal.
\end{proof}

%***************Pasting here**********
\begin{lemma}
The function $c \circ p_3^{-1}:V(Z_{n,s})\rightarrow \textbf{N}$ defines a radio labeling on $Z_{4k,2}$.

\end{lemma}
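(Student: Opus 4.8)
The plan is to follow the template established in Cases 1 and 2: having already shown in Lemma~\ref{lem:bijection3} that $p_3$ is a bijection, it remains only to verify the radio condition for the labeling $c\circ p_3^{-1}$. By Lemma~\ref{lem:fcol} (or the preceding distance-sum lemma), it suffices to check the radio condition $d(u,v)+|c(u)-c(v)|\geq \diam(Z_{4k,2})+1 = 2k+1$ for vertex pairs $(\alpha_{2i-1},\alpha_{2i-1+a})$ and $(\alpha_{2i},\alpha_{2i+a})$ with $a\leq 3$. Here $\phi(4k,2)=k+1$, so $|c(\alpha_t)-c(\alpha_{t+1})|=1$, $|c(\alpha_t)-c(\alpha_{t+2})|=k+1$, $|c(\alpha_t)-c(\alpha_{t+3})|=k+2$, and $|c(\alpha_t)-c(\alpha_{t+4})|=2k+2$; the last already exceeds $\diam+1$, consistent with Lemma~\ref{lem:fcol}.

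The first step is to record the coordinates of the five relevant consecutive vertices $\alpha_{2i-1},\ldots,\alpha_{2(i+2)-1}$ together with their labels, exactly as in the tables for Cases 1 and 2. Since $p_3$ places both $\alpha_{2i-1}$ and $\alpha_{2i}$ in the \emph{same} principal cycle (first coordinate $i$), and consecutive odd/even indices advance the first coordinate by one step at a time, I will need to track how $l_i=\lfloor (i-1)/2\rfloor$ increments: $l_{i+1}-l_i$ and $l_{i+2}-l_i$ are each $0$ or $1$, so only a small number of sub-cases arise. In $Z_{4k,2}$, distance between $(x_1,y_1)$ and $(x_2,y_2)$ on opposite principal cycles can be computed via the standard cycle, which is isomorphic to $C_{4k+1}$; within the same principal cycle it is the ordinary cycle distance $\min\{|y_2-y_1|,4k-|y_2-y_1|\}$. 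Using these, I would build a table of $d(u,v)$ for each of the six pair types, split according to whether the relevant $l$-values agree.

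The second step is the arithmetic verification itself. For the pair $(\alpha_{2i-1},\alpha_{2i})$ the second coordinates differ by $2k$, giving distance $2k$ in the principal cycle, and with label difference $1$ this yields $2k+1=\diam+1$ exactly — the tight case. For the ``$a=2$'' pairs the label gap is $k+1$, so one needs $d\geq k$; the second coordinates differ by roughly $k$ (adjusted by $l$-increments), so this should hold comfortably. For ``$a=1$'' and ``$a=3$'' pairs the vertices lie on opposite principal cycles, so I would pass to the standard $u$-tight cycle and read off the distance as a residue mod $4k+1$; combined with label gaps $1$ and $k+2$ respectively, the inequality should again follow, possibly needing the kind of parity bookkeeping used in Lemma~\ref{lem:usefulfacts}. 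As in the earlier cases, the computations are routine once the distance table is in place, so I would present the table and remark that checking $d(u,v)+|c(u)-c(v)|\geq 2k+1$ in each entry is straightforward.

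The main obstacle I anticipate is not any single hard inequality but the bookkeeping for the opposite-cycle distances: because $s=2$ the standard cycle has odd length $4k+1$ and the ``offset'' $D=\lfloor (n+2)/2\rfloor$ enters, so one must correctly locate each vertex $(x,y)$ on the standard cycle and compute its cyclic distance, keeping the $l_i$ shifts straight. A secondary subtlety is that $p_3$ puts $\alpha_{2i-1}$ and $\alpha_{2i}$ on the \emph{same} principal cycle (unlike $p_1$), so the pattern of which pairs are ``same-cycle'' versus ``cross-cycle'' differs from Case~1 and must be re-derived rather than copied. Once those distances are tabulated correctly, the remaining verification that every entry satisfies $d+|c|\geq 2k+1$ is immediate.
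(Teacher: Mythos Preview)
Your plan is correct and matches the paper's proof: reduce to the six pair types with $|l-k|\le 3$, tabulate positions and labels, compute distances (principal-cycle distance for same-cycle pairs, standard $(1,1)$-tight cycle of length $4k+1$ for cross-cycle pairs), and check $d+|c|\ge 2k+1$ in each row. Your proposed split by the increment $l_{i+1}-l_i\in\{0,1\}$ is exactly the paper's split by the parity of $i$, since $l_{i+1}=l_i$ iff $i$ is odd.

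Two small bookkeeping points to fix before you write the table. First, your same-cycle/cross-cycle attributions are slightly off: with $p_3$ the first coordinate of $\alpha_{2i-1}$ and $\alpha_{2i}$ is $i\bmod 2$, so among the six pairs only $(\alpha_{2i-1},\alpha_{2i})$ and $(\alpha_{2i},\alpha_{2(i+2)-1})$ are same-cycle, while all four ``middle'' pairs (including both $a=2$ pairs) are cross-cycle and require the standard-cycle computation; moreover, because $d\big((1,j),(2,j')\big)\neq d\big((1,j'),(2,j)\big)$ in $Z_{n,2}$, the parity of $i$ determines which principal cycle plays the role of $(1,\cdot)$, which is the real reason the case split is needed. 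Second, the offset $D$ from Case~1 does not appear in $p_3$ at all, so it plays no role here. With those corrections the table comes out exactly as in the paper, and several rows (e.g.\ $(\alpha_{2i-1},\alpha_{2(i+1)-1})$ with $d=k$, $|c|=k+1$) are tight rather than ``comfortable''.
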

\begin{proof}
As before it suffices to check all pairs of the form $(\alpha_{2i-1}, \alpha_{2i-1+a})$ and all pairs of the form
$(\alpha_{2i}, \alpha_{2i+a})$ for $a \leq 3$. For the convenience of the reader, the values of $p_3$ for the pairs of vertices we must check are provided below.

\begin{center}
\begin{tabular}{l|l|l}
 & vertex & label value \\
\hline
$\alpha_{2i-1}$ & $(i,1+k(i-1)-l_{i})$ & $1+(i-1)(k+1)$ \\
$\alpha_{2i}$ & $(i,1+k(i+1)-l_{i})$ & $2+(i-1)(k+1)$\\
$\alpha_{2(i+1)-1}$ & $(i+1,1+ki-l_{i+1})$ & $1+i(k+1)$\\
$\alpha_{2(i+1)}$ & $(i+1,1+k(i+2)-l_{i+1})$ & $2+i(k+1)$\\
$\alpha_{2(i+2)-1}$ & $(i+2,1+k(i+1)-l_{i+2})$ & $=1+(i+1)(k+1)$\\
\end{tabular}
\end{center}

We will have to compute distances in $Z_{n,2}$. It is easy to see that $d\left((1,j),(2,j)\right)=1$ and $d\left((i, j),(i,j')\right)=\textrm{min} \{|j-j'|, 4k-|j-j'| \}$. The distance $d\left((1, j)(2,j')\right)$, $j\neq j'$, is somewhat harder to compute. For this purpose we can use the standard cycle in $Z_{4k,2}$ after appropriate renaming of the
vertices. In particular, $d\left((1, j),(2,j')\right)=d\left((1, j-j+1),(2,j'-j+1)\right)=d\left((1, 1),(2,j'-j+1)\right)$. Let $r \equiv j'-j+1$ $\mod 4k$ and $r \in\{1,...,n\}$. Then $d\left((1, j),(2,j')\right)=d\left((1, 1),(2,r)\right)=d_{C_{n+1}}(X^2_1, X^2_r)=\textrm{min} \{r-1,
n+1-(r-1) \}$. Note that in $Z_{n,2}$, $d\left((1, j)(2,j')\right) \neq d\left((1, j')(2,j)\right)$ thus $d(\alpha_s,
\alpha_t)$ depends on the parities of $\left\lfloor \frac{s}{2} \right\rfloor$ and $\left\lfloor \frac{t}{2} \right\rfloor$.

The following table shows the distances and label differences of the relevant pairs computed using the methods described above.

\begin{center}
\begin{tabular}{c||c|c|c}
{vertex pair} & {$d(u,v)$, $i$ even} & {$d(u,v)$, $i$ odd} & {$|c(u)-c(v)|$}\\

 \hline \hline

$(\alpha_{2i-1}, \alpha_{2i})$ & {$2k$} & {$2k$} &  {$1$} \\\hline

{$(\alpha_{2i-1}, \alpha_{2(i+1)-1})$} & {$d\left((1,1),(2,3k+2)\right)$} & {$d\left((1,1)(2,k+1)\right)$} & $k+1$
\\
{}&$=k$&$=k$ &{} \\ \hline

{$(\alpha_{2i-1}, \alpha_{2(i+1)})$} & {$d\left((1,1),(2,k+2)\right)$} & {$d\left((1,1),(2,3k+1)\right)$} & $k+2$\\

{}&$=k+1$&$=k+1$&{} \\ \hline

{$(\alpha_{2i}, \alpha_{2(i+1)-1})$} & {$d\left((1,1),(2,k+2)\right)$} & {$d\left((1,1),(2,3k+1)\right)$} & $k$
\\
{}&$=k+1$&$=k+1$&{} \\

\hline

{$(\alpha_{2i}, \alpha_{2(i+1)})$} & {$d\left((1,1),(2,3k+2)\right)$} & {$d\left((1,1),(2,k+1)\right)$} & $k+1$ \\

{}&$=k$&$=k$&{}\\ \hline

{$(\alpha_{2i}, \alpha_{2(i+2)-1})$} & {$\geq 1$} & {$\geq 1$} & $2k+1$ \\ \hline

\end{tabular}
\end{center}
 \end{proof}

%*****************ALGORITHM 4**********************************
\begin{center}
\textbf{Case 4: $n=4k+2$ when $k$ is even
 and $s=3$}
\end{center}

The position function is
\begin{eqnarray}
p_4(\alpha_{2i-1})&=(l_i, 1+(i-1)k), \textrm{ and} \nonumber \\
p_4(\alpha_{2i})& =(l_i, 2+(i+1)k),
\end{eqnarray}
where
\[l_i=\left\{
           \begin{array}{ll}
             0, & \hbox{$i \leq 2k+1$,}  \\
             1, &
             \hbox{$2k+1 < i \leq 4k+2$.}\\
           \end{array}
         \right.\]

\begin{lemma} \label{lem:bijection4}
The function $p_4: \{\alpha_j \,|\, j=1,...,2n\} \rightarrow V(Z_{n,s})$ is a bijection.
\end{lemma}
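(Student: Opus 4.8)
The plan is to follow the pattern of Lemmas \ref{lem:bijection2} and \ref{lem:bijection3}. Since $\{\alpha_j \mid j = 1,\dots,2n\}$ and $V(Z_{n,s})$ both have exactly $2n$ elements, it suffices to show that $p_4$ is injective. So I would assume $p_4(\alpha_a) = p_4(\alpha_b)$, choose $i$ with $\alpha_a \in \{\alpha_{2i-1}, \alpha_{2i}\}$ and $j$ with $\alpha_b \in \{\alpha_{2j-1}, \alpha_{2j}\}$, and aim to conclude that $i = j$ and that $a,b$ have the same parity, whence $a = b$. The first step is cheap: the first coordinate of $p_4(\alpha_a)$ is $l_i \in \{0,1\}$, so $p_4(\alpha_a) = p_4(\alpha_b)$ forces $l_i = l_j$, which by the definition of $l$ means $i$ and $j$ are both $\le 2k+1$ or both $\ge 2k+2$.

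Next I would split on the parities of $a$ and $b$. If $a$ and $b$ have the same parity, then in both the odd--odd and the even--even subcase, equating second coordinates modulo $n = 4k+2$ gives $(i-j)k \equiv 0 \pmod{4k+2}$. Since $k$ is even, write $k = 2m$; the congruence becomes $(i-j)m \equiv 0 \pmod{2k+1}$, and because $\gcd(m,2k+1) = \gcd(m,4m+1) = 1$ this forces $(2k+1)\mid(i-j)$. As $i,j \in \{1,\dots,4k+2\}$, the only possibilities are $i - j = 0$, which is exactly what we want, or $i - j = \pm(2k+1)$. In the latter case, say $i - j = 2k+1$, we must have $j \le 2k+1$ and $i \ge 2k+2$, so $l_j = 0 \ne 1 = l_i$, contradicting $l_i = l_j$; the case $i - j = -(2k+1)$ is symmetric. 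If instead $a$ and $b$ have opposite parity, say $a = 2i-1$ and $b = 2j$, then equating second coordinates gives $(i - j - 2)k \equiv 1 \pmod{4k+2}$; the left side is even (as $k$ is even) and $4k+2$ is even, so $1$ would have to be even as well, a contradiction. This rules out the mixed case entirely and finishes the argument.

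The only delicate point, and the one I would be most careful about, is the same-parity case, where the hypothesis "$k$ even'' is used twice: once to halve the modulus when solving $(i-j)k \equiv 0 \pmod{4k+2}$, which is what collapses the solution set to $\{0, \pm(2k+1)\}$, and once — implicitly, through the threshold $2k+1$ appearing in the definition of $l_i$ — to guarantee that the two nonzero solutions straddle that threshold and are therefore killed by the first-coordinate comparison. The opposite-parity case, by contrast, is essentially immediate once one notices the parity obstruction.
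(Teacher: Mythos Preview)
Your proof is correct and follows essentially the same approach as the paper's: both split on the parity of $a$ and $b$, reduce the same-parity case via $k=2q$ and $\gcd(q,4q+1)=1$ to force $(2k+1)\mid(i-j)$, then kill the nonzero solutions using the first-coordinate constraint $l_i=l_j$, and dispose of the opposite-parity case by the even/odd obstruction modulo $4k+2$. The only differences are cosmetic---you extract $l_i=l_j$ at the outset rather than invoking it at the end, and you treat $i-j=\pm(2k+1)$ explicitly rather than assuming $i>j$---but the logic is identical.
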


\begin{proof}

Suppose $p_4(\alpha_{a})=p_4(\alpha_{b})$. Let $i=\left\lfloor \frac{a}{2} \right\rfloor$ and $j=\left\lfloor \frac{b}{2} \right\rfloor$. If
$a$ and $b$ have the same parity, it follows that $ki \equiv kj \mod (4k+2)$, i.e., $(i-j)k=(4k+2)m$ for some integer
$m$. As $k$ is even, $k=2q$ for some integer $q$. Substituting and simplifying, we obtain the equation
$q(i-j)=m(4q+1)$. As $\gcd(q, 4q+1)=1$, it follows that $q|m$ and thus $m \geq q$, so $(i-j) \geq 4q+1=2k+1$. But in this
case $l_j \neq l_i$, so the first coordinates of $p_4(\alpha_{a})$ and $p_4(\alpha_{b})$ are different.

If $a$ is odd and $b$ is even, it follows that $1+(j+1)k-(i-1)k \equiv 0 \mod 4k+2$. So $1+k(j-i+2) \equiv 0 \mod 4k+2$. As $k$ is even by hypothesis, $1+k(j-i+2)$ is odd, but $4k+2$ is even, a contradiction.
 \end{proof}

\begin{lemma}
The function $c \circ p_4^{-1}:V(Z_{n,s})\rightarrow \textbf{N}$ defines a valid radio labeling on $Z_{4k+2,3}$ when $k$ is even.

\end{lemma}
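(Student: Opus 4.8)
The plan is to follow the template of the three preceding cases. Since $p_4$ is a bijection by Lemma~\ref{lem:bijection4}, the map $c\circ p_4^{-1}$ is a well-defined labeling, and its span is $c(\alpha_{2n})=2+(n-1)(k+2)=(n-1)\phi(n,3)+2$; so everything comes down to verifying the radio condition, which here reads $d(u,v)+|c(u)-c(v)|\ge \diam(Z_{4k+2,3})+1=2k+2$ (using Remark~\ref{rmk:radius}), with $\phi(n,3)=k+2$ by Lemma~\ref{lem:phifunct}. As in the earlier cases, Lemma~\ref{lem:fcol} reduces this to the pairs $(\alpha_{2i-1},\alpha_{2i-1+a})$ and $(\alpha_{2i},\alpha_{2i+a})$ with $a\le 3$, i.e. to the six families $(\alpha_{2i-1},\alpha_{2i})$, $(\alpha_{2i-1},\alpha_{2(i+1)-1})$, $(\alpha_{2i-1},\alpha_{2(i+1)})$, $(\alpha_{2i},\alpha_{2(i+1)-1})$, $(\alpha_{2i},\alpha_{2(i+1)})$, $(\alpha_{2i},\alpha_{2(i+2)-1})$; from Definition~\ref{defin:labeling} the corresponding label differences are $1,\ k+2,\ k+3,\ k+1,\ k+2,\ 2k+3$, each independent of $i$.

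The key observation is that the first coordinate $l_i$ is constant on $\{1,\dots,2k+1\}$ and again on $\{2k+2,\dots,4k+2\}$, so it changes value only between $i=2k+1$ and $i=2k+2$. Consequently, for every $i$ other than $2k+1$ (and, in the sixth family, also $i=2k$, where however the label gap $2k+3\ge\diam$ makes the radio condition automatic once one notes $d\ge 1$ by bijectivity) both endpoints of each pair lie on the same principal cycle. For those pairs I would compute $d$ from the tightness of the principal cycles (Remark~\ref{rmk:principletightcycles}): the second coordinates differ by $2k+1,\ k,\ 3k+1,\ k+1,\ k$ in the first five families, so (reducing mod $n=4k+2$) the distances are $2k+1,\ k,\ k+1,\ k+1,\ k$, whence $d+|c|$ equals $2k+2,\ 2k+2,\ 2k+4,\ 2k+2,\ 2k+2\ge 2k+2$. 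I would record these computations in a table, as in Cases 1--3.

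It then remains to treat $i=2k+1$, where in families two through five the two endpoints lie on different principal cycles. For such a pair I would use the rotational automorphism of $Z_{n,3}$ (adding a constant to all second coordinates) to move the cycle-$1$ endpoint to $(1,1)=X^3_1$; the cycle-$2$ endpoint then becomes $X^3_r$ for a suitable $r\in\{2,\dots,n\}$, and since the standard cycle is $(1,1)$-tight and isomorphic to $C_{4k+2}$ (Remark~\ref{rmk:standardtightcycles}) we get $d=\min\{r-1,\,n-(r-1)\}$. Substituting the explicit positions $p_4(\alpha_{4k+1})=(0,1+2k^2)$, $p_4(\alpha_{4k+2})=(0,2+2k^2+2k)$, $p_4(\alpha_{4k+3})=(1,1+2k^2+k)$, $p_4(\alpha_{4k+4})=(1,2+2k^2+3k)$ yields $r=3k+3,\ k+2,\ k+2,\ 3k+3$ and hence distances $k,\ k+1,\ k+1,\ k$ for families two through five, giving $d+|c|=2k+2,\ 2k+4,\ 2k+2,\ 2k+2\ge 2k+2$. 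This exhausts all cases.

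I expect the main obstacle to be the bookkeeping around the single jump of $l_i$ at $i=2k+1$: there the affected pairs become cross-cycle, so their distances must be computed from the correct $(1,1)$-tight standard cycle rather than from a principal cycle, and several of the resulting bounds---like those in the fourth family, both generically and at the jump---are sharp, $d+|c|=\diam+1$, leaving no margin for a careless distance estimate. Everything else is a finite, routine computation best displayed in tabular form.
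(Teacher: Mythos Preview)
Your argument is correct, and the computations you outline all check out: the generic same-cycle distances are $2k+1,\,k,\,k+1,\,k+1,\,k$, the cross-cycle distances at the jump $i=2k+1$ come out the same via the standard cycle, and the sixth family is handled by the large label gap. So the proof goes through.

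The paper's route, however, is considerably shorter, and the difference is worth noting. You treat the value of $l_i$ as genuinely mattering and therefore split into a generic case (same principal cycle) and a boundary case at $i=2k+1$ (different principal cycles), computing the latter via the $(1,1)$-tight standard cycle. The paper instead observes once and for all that in $Z_{n,3}$ the distance between two vertices depends only on their second coordinates: since $(1,j)$ is adjacent to each of $(2,j-1),(2,j),(2,j+1)$, one can switch principal cycles for free while stepping along, so $d\big((x_1,y_1),(x_2,y_2)\big)=\min\{|y_2-y_1|,\,n-|y_2-y_1|\}$ whenever $y_1\neq y_2$. With that in hand $l_i$ is irrelevant, the boundary case disappears entirely, and a single table covering all $i$ suffices. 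Your approach has the virtue of reusing exactly the same machinery (principal and standard tight cycles) as the earlier cases, while the paper's observation is specific to $s=3$ but eliminates the case split and the separate verification at $i=2k+1$.
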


\begin{proof}
 Since
diam$(Z_{4k+2,3})=2k+1$, we need to show that $d(u,v)+|c(u)-c(v)|\geq 2k+2$ for all pairs $u,v \in Z_{4k+2,3}$. Again
it suffices to check only the pairs $(\alpha_{2i-1}, \alpha_{2i-1+a})$ and the pairs of the form $(\alpha_{2i},
\alpha_{2i+a})$ for $a \leq 3$. Below are given the positions and the labels of these vertices.
\medskip

\begin{center}
\begin{tabular}{l|l|l}
 & vertex & label\\
\hline
$\alpha_{2i-1}$ & $\left(l_i,1+k(i-1)\right)$ & $1+(i-1)(k+2)$ \\
$\alpha_{2i}$ & $\left(l_i,2+k(i+1)\right)$ & $2+(i-1)(k+2)$\\
$\alpha_{2(i+1)-1}$ & $(l_{i+1},1+ki)$ & $1+i(k+2)$\\
$\alpha_{2(i+1)}$ & $\left(l_{i+1},2+k(i+2)\right)$ & $2+i(k+2)$\\
$\alpha_{2(i+2)-1}$ & $\left(l_{i+2},1+k(i+1)\right)$ & $1+(i+1)(k+2)$\\
\end{tabular}
\end{center}

Note that in $Z_{n,3}$, $d\left((x_1,y_1),(x_2,y_2)\right)=\textrm{min} \{|y_2-y_1|, n-|y_1-y_2| \}$ so the first coordinates of
the vertices are irrelevant when computing distances. As $l_i$ only appears in the first coordinates, we do not have to consider the cases of
$l_i=l_{i+1}$ and $l_i \neq l_{i+1}$ separately. Below are given all the relevant distances and label differences. It
is easy to verify that the condition $d(u,v)+|c(u)-c(v)|\geq 2k+2$ is satisfied for all pairs.

\begin{center}
\begin{tabular}{c||c|c}
{vertex pair} & {$d(u,v)$} & {$|c(u)-c(v)|$} \\
 \hline \hline

$(\alpha_{2i-1}, \alpha_{2i})$ & {$2k$+1} & $1$  \\\hline

{$(\alpha_{2i-1}, \alpha_{2(i+1)-1})$} & {$k$}  & {$k+2$} \\ \hline

{$(\alpha_{2i-1}, \alpha_{2(i+1)})$} & {$1+k$} & {$k+3$} \\ \hline

{$(\alpha_{2i}, \alpha_{2(i+1)-1})$} & {$k+1$} & {$k+1$}
\\ \hline

{$(\alpha_{2i}, \alpha_{2(i+1)})$} & {$k$} & {$k+2$} \\ \hline

{$(\alpha_{2i}, \alpha_{2(i+2)-1})$} & {$1$} & {$2k+3$} \\ \hline
\end{tabular}
\end{center}
 \end{proof}

\end{document}